\title{On quasi-isometry invariants associated to the derivation of a Heintze group}
\newcommand{\R}{\mathbb{R}}
\newtheorem{theorem}{Theorem}[section]
\newtheorem{lemma}[theorem]{Lemma}
\newtheorem{corollary}[theorem]{Corollary}
\newenvironment{remark}[1][Remark.]{\begin{trivlist}
\item[\hskip \labelsep \textbf{#1}]}{\end{trivlist}}
\newcommand{\Addresses}{{
\bigskip
\footnotesize

M.~Carrasco Piaggio, \emph{Instituto de Matem\'atica y Estad\'istica, Universidad de la Rep\'ublica}\par\nopagebreak
\textit{E-mail address}, \texttt{mcarrasco@fing.edu.uy}

\medskip

E.~Sequeira, \emph{Centro de Matem\'atica, Universidad de la Rep\'ublica}\par\nopagebreak
\textit{E-mail address}, \texttt{esequeira@cmat.edu.uy}
}}
\begin{document}

\author{Matias Carrasco Piaggio \and Emiliano Sequeira}

\maketitle

\begin{abstract}
A a Heintze group is a Lie group of the form $N\rtimes_\alpha \R$, where $N$ is a simply connected nilpotent Lie group and $\alpha$ is a derivation of $\mathrm{Lie}(N)$ whose eigenvalues all have positive real parts. We show that if two purely real Heintze groups equipped with left-invariant metrics are quasi-isometric, then up to a positive scalar multiple, their respective derivations have the same characteristic polynomial. Using the same thecniques, we prove that if we restrict to the class of Heintze groups for which $N$ is the Heisenberg group, then the Jordan form of $\alpha$, up to positive scalar multiples, is a quasi-isometry invariant.
\end{abstract}
\begin{quote}
\footnotesize{\textbf{Keywords}: quasi-isometry invariant, Heintze group, characteristic polynomial, Jordan form, Heisenberg group}
\end{quote}
\begin{quote}
\footnotesize{\textbf{AMS MSC 2010}: 20F67, 30L10, 53C30}
\end{quote}

\section{Introduction}

Negatively curved homogeneous manifolds where characterized by Heintze in \cite{Heintze74}. Each such manifold is isometric to a solvable Lie group $X_\alpha$ equipped with a left invariant metric, and the group is a semi-direct product $N\rtimes_\alpha \R$ where $N$ is a connected, simply connected, nilpotent Lie group, and $\alpha$ is a derivation of $\mathrm{Lie}(N)$ whose eigenvalues all have positive real parts. Such a group is called a Heintze group.

A purely real Heintze group is a Heintze group $X_\alpha$ as above, for which $\alpha$ has only real eigenvalues. Every Heintze group is quasi-isometric to a purely real one, unique up to isomorphism, see \cite[Section 5B]{Cornulier13}. We will focus only on purely real Heintze groups.

An important conjecture regarding the large scale geometry of purely real Heintze groups states that two such groups are quasi-isometric if, and only if, they are isomorphic \cite[Conjecture 6.B.2]{Cornulier13}. This question is motivated by the original works of Pansu and Hamenst\"adt \cite{UrsulaHamenstadt87,PierrePansu89m,PierrePansu89d}. The conjecture is known to be true in some particular cases, but is still open in its full generality. It was proved by Pansu when both groups are of Carnot type. Recall that a purely real Heintze group is of Carnot type if the eigenspace associated to the smallest eigenvalue of $\alpha$ generates $\mathrm{Lie}(N)$.

If we restrict to the family of abelian type Heintze groups, i.e.\ when $N$ is Abelian, Xie showed that the canonical Jordan form of the derivation $\alpha$, up to scalar multiples, is a quasi-isometry invariant \cite{NageswariXie12,XiangdongXie12,XiangdongXie14}. This is enough to prove the conjecture in the abelian case. In \cite{XiangdongXie14c}, Xie worked out the case when $N$ is the Heisenberg group and $\alpha$ is a diagonalizable derivation. We refer the reader to \cite{Cornulier13} for a more detailed survey on the quasi-isometric classification of locally compact groups, including Heintze groups.

In this note we focus on the quasi-isometry invariants associated to the derivation $\alpha$. Our main motivation is the following question: let $X_\alpha=N_1\rtimes_\alpha \R$ and $X_\beta=N_2\rtimes_\beta \R$ be two quasi-isometric purely real Heintze groups, have the derivations $\alpha$ and $\beta$ the same (up to scalar multiples) Jordan form? In this generality, the question is far from being answered, even if $N_1$ and $N_2$ are the same group.

The main tools to define quasi-isometry invariants for this class of groups are the $L^p$-cohomology and its related cousins, like the Orlicz cohomology, the space of functions of finite $p$-variation, and other similar invariant functional spaces defined on the boundary of the group. For $p\in[1,\infty)$, the local continuous $L^p$-cohomology of $X_\alpha$ can be identified with the Fr\'echet algebra $A_p(N,\alpha)$ of measurable functions $u:N\to \R$ (up to a.e.\ constants) such that the discrete derivative $\Delta u(x,y)=u(x)-u(y)$ has finite $p$-norm on any compact set of $N\times N$. The $p$-norm is with respect to the measure on $N\times N$ given by $dx\otimes dy/\varrho_\alpha(x,y)^{2\mathrm{tr}(\alpha)}$, with $\varrho_\alpha$ an Ahlfors regular parabolic visual quasi-metric on $N$. See Section \ref{preliminaries} for the definition of the parabolic visual boundary.

Since the local continuous $L^p$-cohomology is invariant by quasi-isometries, the function $s_{N,\alpha}:[1,\infty)\to \R$ which associates to $p$ the dimension of the spectrum of $A_p(N,\alpha)$ is also an invariant. This function is locally constant on the complement of the finite set $\left\{\mathrm{tr}(\alpha)/\lambda:\lambda\text{ is an eigenvalue of }\alpha\right\}$, and can have jumps at these points. In many concrete examples, one may use this fact to show that the eigenvalues of $\alpha$ are invariants. Nevertheless, in the general case, it provides not enough information to deal with the entire spectrum of $\alpha$.

Let us explain this point by an example. Consider a finite simple directed graph $\Gamma$, with set of vertices $V=\{v_1,\ldots,v_p\}$ and set of edges $E=\{e_1,\ldots,e_q\}$. We associate to $\Gamma$ a $2$-step nilpotent Lie algebra of dimension $n = p + q$,
$$\mathfrak{n}_\Gamma=\mathrm{Span}(X_1,\ldots,X_p,Z_1,\ldots,Z_q),$$
defined by
$$[X_i,X_j]=\begin{cases}Z_k & \text{ if }e_k=(v_i,v_j),\\ 0&\text{ otherwise}.\end{cases}$$
Two such Lie algebras are isomorphic if, and only if, the graphs from which they arise are isomorphic \cite{Mainkar15}.

Let $\Gamma_1$ and $\Gamma_2$ be the two graphs shown in Figure \ref{graphs1}, $\mathfrak{n}_1$ and $\mathfrak{n}_2$ be the corresponding Lie algebras, and $N_1$ and $N_2$ be the corresponding Lie groups. Define $\alpha$ to be the derivation of $\mathfrak{n}_1$ given by
$$\alpha(X_1)=X_1,\ \alpha(X_2)=2X_2,\ \alpha(X_3)=3X_3,\ \alpha(Z_1)=3Z_1,\ \alpha(Z_2)=5Z_2,\ \alpha(Z_3)=4Z_3;$$
and $\beta$ to be the derivation of $\mathfrak{n}_2$ given by
$$\beta(X_1)=X_1,\ \beta(X_2)=2X_2,\ \beta(X_3)=3X_3,\ \beta(X_4)=3X_4,\ \beta(Z_1)=3Z_1,\ \beta(Z_2)=6Z_2.$$
Consider the associated Heintze groups $N_1\rtimes_\alpha\R$ and $N_2\rtimes_\beta\R$. As far as we know, even though $\alpha$ and $\beta$ are diagonal derivations, there is no previous known result that can be applied to show that these groups are not quasi-isometric. For instance, for all $p\in [1,\infty)$, the dimension of the spectrum of the local continuous $L^p$-cohomology is the same for both groups; see Figure \ref{graphs2}. That is, the $L^p$-cohomology is not able to capture all the eigenvalues of the respective derivations. We refer the reader to \cite[Corollaire 4.4]{Pansu89t} and \cite[Theorem 1.4]{Carrasco14} for the computation of the spectrum of the $L^p$-cohomology.

\begin{figure}[h]
\centering
\setlength{\unitlength}{1cm}
\begin{picture}(5,2.3)
\includegraphics[width=5cm]{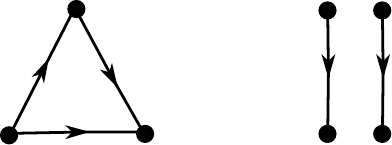}
\put(-5.5,2){\footnotesize (i)}
\put(-2,2){\footnotesize (ii)}
\put(-1.35,1.7){\footnotesize $X_1$}
\put(-1.35,0.0){\footnotesize $X_2$}
\put(-1.25,0.85){\footnotesize $Z_1$}
\put(0,0.85){\footnotesize $Z_2$}
\put(0.05,1.7){\footnotesize $X_3$}
\put(0.05,0.0){\footnotesize $X_4$}

\put(-3.85,1.7){\footnotesize $X_1$}
\put(-3,0.0){\footnotesize $X_2$}
\put(-5.4,0.0){\footnotesize $X_3$}
\put(-3.45,0.85){\footnotesize $Z_1$}
\put(-5,0.85){\footnotesize $Z_3$}
\put(-4.2,0.3){\footnotesize $Z_2$}
\end{picture}
\caption{In (i) the graph $\Gamma_1$ and in (ii) the graph $\Gamma_2$.\label{graphs1}}
\end{figure}

\begin{figure}[h]
\centering
\setlength{\unitlength}{1cm}
\begin{picture}(8,1.5)
\includegraphics[width=8cm]{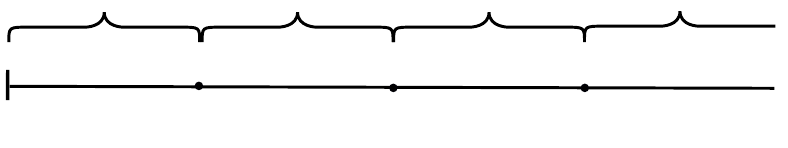}
\put(-8,0.1){\footnotesize $1$}
\put(0.1,0.5){\footnotesize $p$}
\put(-7.5,1.5){\footnotesize $\mathrm{dim}=0$}
\put(-5.5,1.5){\footnotesize $\mathrm{dim}=3$}
\put(-1.6,1.5){\footnotesize $\mathrm{dim}=6$}
\put(-3.6,1.5){\footnotesize $\mathrm{dim}=5$}
\put(-2.5,0.1){\footnotesize $\mathrm{tr}(\alpha)=\mathrm{tr}(\beta)=18$}
\put(-4.7,0.1){\footnotesize $\mathrm{tr}(\alpha)/2=9$}
\put(-6.7,0.1){\footnotesize $\mathrm{tr}(\alpha)/3=6$}
\end{picture}
\caption{Dependence on $p$ of the dimension of the spectrum of the local continuous $L^p$-cohomology for the groups $N_1\rtimes_\alpha\R$ and $N_2\rtimes_\beta\R$ in the example above.\label{graphs2}}
\end{figure}

Let us state our main results. We denote by $0<\lambda_1<\cdots <\lambda_d$ the eigenvalues of $\alpha$, and let $\mathfrak{n}=V_1\oplus\cdots\oplus V_d$ be the direct-sum decomposition of $\mathfrak{n}$ into the generalized eigenspaces of $\alpha$. Here $\mathfrak{n}$ denotes the Lie algebra of $N$. Our first result says that the data consisting of $d$, $\lambda_i/\lambda_1$, and $\mathrm{dim}V_i$ for all $i=1,\ldots,d$, is a quasi-isometry invariant of the group.

\begin{theorem}\label{maintheorem}
Let $X_\alpha=N_1\rtimes_\alpha \R$ and $X_\beta=N_2\rtimes_\beta \R$ be two quasi-isometric purely real Heintze groups. Then there exists $s>0$ such that $\alpha$ and $s\beta$ have the same characteristic polynomial.
\end{theorem}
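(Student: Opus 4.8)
The plan is to move the problem to the boundary and study the induced map there. Purely real Heintze groups are Gromov hyperbolic, and a quasi-isometry $X_\alpha\to X_\beta$ extends to a homeomorphism of the boundaries at infinity which restricts to a quasisymmetry (equivalently, a quasi-Möbius homeomorphism) $F:(N_1,\varrho_\alpha)\to(N_2,\varrho_\beta)$ of the Ahlfors regular parabolic visual quasimetric spaces introduced in Section \ref{preliminaries}. The statement to prove then becomes intrinsic to these spaces: the multiset $\{(\lambda_i/\lambda_1,\dim V_i)\}_{i=1}^d$, which is precisely the characteristic polynomial of $\alpha$ up to a positive scalar, is a quasisymmetry invariant of $(N,\varrho_\alpha)$. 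The scalar $s$ in the conclusion will emerge as the ratio $\lambda_1/\mu_1$ of the smallest eigenvalues, i.e. the ratio of the two conformal scales.

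The geometry I would exploit is the homogeneity of $(N,\varrho_\alpha)$ under the one-parameter group of dilations $\delta_t=\exp(t\alpha)$, which act as quasi-homotheties, together with the canonical flag of contracting subgroups $W_k=\exp(V_1\oplus\cdots\oplus V_k)$ coming from the generalized eigenspace decomposition $\mathfrak n=V_1\oplus\cdots\oplus V_d$. Normalizing so that the $\lambda_1$-directions are bi-Lipschitz, a direction lying in $V_i$ is Hölder of exponent $\lambda_1/\lambda_i\le 1$ for $\varrho_\alpha$; since this exponent is strictly decreasing in $i$, the regularity of directions distinguishes the layers and makes the filtration $W_1\subset\cdots\subset W_d=N$ metrically intrinsic. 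A first bookkeeping identity is that the Hausdorff dimension of $(N,\varrho_\alpha)$ equals $Q=\mathrm{tr}(\alpha)/\lambda_1=\sum_i(\lambda_i/\lambda_1)\dim V_i$, recording one relation among the invariants we want to pin down.

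The core step is to show that $F$ respects this filtration and matches its graded data. I would establish that $F$ carries the regularity strata of $(N_1,\varrho_\alpha)$ to those of $(N_2,\varrho_\beta)$, so that it permutes the flags and, after comparing scaling rates along the preserved foliations, forces the exponents $\lambda_i/\lambda_1$ to agree and the leaf dimensions $\dim V_i$ to agree. The crucial point is that I only need these dimensions and these ratios, not the Lie bracket relations nor the Jordan block sizes; a dimension count on the leaves and quotients of the preserved foliations therefore suffices. This is exactly the information that the plain dimension of the spectrum of $A_p(N,\alpha)$ cannot isolate, because the nilpotent bracket relations entangle the contributions of distinct layers, as the example of $\Gamma_1,\Gamma_2$ in the introduction illustrates. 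To certify the multiplicities cleanly I would track, at each critical exponent $p_i=\mathrm{tr}(\alpha)/\lambda_i$, the refined $L^p$-cohomological data attached to the sub-quotients $W_i/W_{i-1}$, whose jump at $p_i$ has magnitude exactly $\dim V_i$; quasi-isometry invariance of this refined data yields the pairing of the critical exponents, hence of the ratios $\lambda_i/\lambda_1$ and of the dimensions $\dim V_i$, and thus the equality of the characteristic polynomials of $\alpha$ and $s\beta$.

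The main obstacle is the core step itself: proving that $F$ preserves the eigenvalue filtration with the correct graded dimensions. This requires an almost-everywhere differentiation theory for quasisymmetric maps adapted to the dilation structure in a setting that is neither diagonalizable nor abelian, so that the tangent maps are graded quasisymmetries between the associated homogeneous spaces and must send flag to flag. The delicate pieces are the existence and regularity of these tangent maps, the measure-theoretic argument that pins down the dimensions of the graded pieces, and the care needed to extract only the eigenvalue-with-multiplicity data, so that the argument stops short of requiring the full rigidity predicted by Conjecture 6.B.2 of \cite{Cornulier13}.
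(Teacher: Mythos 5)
There is a genuine gap, and it sits exactly where you place your ``core step.'' You reduce correctly to a boundary map $F:(N_1,\varrho_\alpha)\to(N_2,\varrho_\beta)$ and correctly identify that one must show $F$ respects the eigenvalue filtration with matching graded dimensions, but you then defer that step to an ``almost-everywhere differentiation theory for quasisymmetric maps'' which you do not construct and which does not exist in the literature in this non-diagonalizable, non-Carnot generality. Worse, your argument stays at the quasisymmetric level, where none of the quantities you invoke are invariant: quasisymmetric homeomorphisms can change H\"older exponents of curves, Hausdorff dimensions, and finiteness of Hausdorff distances, so ``regularity strata'' and the bookkeeping identity $Q=\mathrm{tr}(\alpha)/\lambda_1$ give no a priori invariant. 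The paper closes precisely this hole by quoting two hard external results: first (Fact 1)--(Fact 2) to split off the Carnot-type case (which your proposal never addresses, and where $\mathfrak{h}_\alpha$ is not a proper subalgebra so the whole stratification degenerates); then (Fact 3), from \cite{Carrasco14}, that $F$ maps left cosets of $H_\alpha$ into left cosets of $H_\beta$, and (Fact 4), the Le Donne--Xie theorem \cite{LeDonneXie14}, which upgrades such a fiber-preserving quasisymmetry to a \emph{bi-Lipschitz} map once the smallest eigenvalues are normalized to coincide. Only after this upgrade do metric arguments run. The remaining engine is elementary and completely different from what you sketch: Lemma \ref{distancecosets} shows two cosets of $H$ are at finite Hausdorff distance if and only if they lie in the same coset of the normalizer $N(H)$; since a bi-Lipschitz map preserves finiteness of Hausdorff distance, the entire normalizer chain $H_\alpha\triangleleft N(H_\alpha)\triangleleft\cdots\triangleleft N_1$ is respected, and Lemma \ref{premaintheorem} concludes by induction on dimension using multiplicativity of characteristic polynomials, $p_\alpha=p_{\tilde\alpha}\,p_{\alpha|_{\mathfrak{h}}}$, across the subgroup and the quotient. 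No differentiation theory and no analytic multiplicity count is needed.

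Two further concrete defects. First, your ``canonical flag of contracting subgroups $W_k=\exp(V_1\oplus\cdots\oplus V_k)$'' need not exist: by the bracket relation (\ref{ursula}), $[V_i,V_j]\subset V_m$ whenever $\lambda_m=\lambda_i+\lambda_j$, and one can have $\lambda_m=2\lambda_1>\lambda_k$, so $V_1\oplus\cdots\oplus V_k$ is generally not a subalgebra and its exponential is not a subgroup; the paper instead works with $\mathfrak{u}_\alpha=\mathrm{LieSpan}(V_1)$, $\mathfrak{h}_\alpha=\mathrm{LieSpan}(V_1^*)$ and their normalizers, which are genuine subgroups. Second, your fallback for certifying multiplicities --- a ``refined $L^p$-cohomological datum attached to the sub-quotients $W_i/W_{i-1}$ whose jump at $p_i$ has magnitude exactly $\dim V_i$'' --- is both unsupported and circular: the paper's own example ($\Gamma_1$ versus $\Gamma_2$ in the introduction) is designed to show that the known $L^p$-cohomological invariants cannot separate groups whose characteristic polynomials differ, and no quasi-isometry invariance of data ``attached to a sub-quotient'' can be asserted before one knows that the quasi-isometry respects that sub-quotient, which is the very statement being proved.
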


The next corollary follows immediately.

\begin{corollary}
Under the same hypothesis of Theorem \ref{maintheorem}, if $\alpha$ and $\beta$ are diagonalizable derivations, then their respective diagonal forms must be proportional.
\end{corollary}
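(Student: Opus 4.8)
The plan is to turn the quasi-isometry into an analytic statement on the boundary and then read the eigenvalues of $\alpha$ off a family of $L^p$-cohomology thresholds. First I would use that each $X_\alpha$ is Gromov hyperbolic, so a quasi-isometry $X_\alpha\to X_\beta$ extends to a quasisymmetric homeomorphism $F\colon (N_1,\varrho_\alpha)\to(N_2,\varrho_\beta)$ between the parabolic visual boundaries, and that the (reduced, continuous) $L^p$-cohomology in \emph{every} degree is a quasi-isometry invariant. All the invariants I intend to extract are dimensionless thresholds of the form $p=\mathrm{tr}(\alpha)/w$, which are unchanged under $\alpha\mapsto s\alpha$; hence they detect the eigenvalues only up to a common positive scale. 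This scale invariance is exactly the source of the undetermined scalar, and at the very end I would simply set $s=\mathrm{tr}(\alpha)/\mathrm{tr}(\beta)$.

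The degree-one cohomology, identified with $A_p(N,\alpha)$, is not enough, and this must be confronted head-on. Its spectrum only involves the horizontal gradient, so it records the action of $\alpha$ on the first layer $\mathfrak n/[\mathfrak n,\mathfrak n]$, while the eigenvalues on the deeper layers enter only through the dimensions of generated subalgebras and are therefore conflated. The example of $\Gamma_1,\Gamma_2$ in the introduction is precisely a case where $s_{N,\alpha}$ agrees for $\alpha$ and $\beta$ even though the characteristic polynomials differ: the undetected eigenvalues are those carried by the central generators. To separate the eigenvalues one at a time I would pass to the $L^p$-cohomology in all degrees.

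The key computation I would carry out uses a weight basis $e_1,\dots,e_n$ of $\mathfrak n$, with $\alpha e_j=\lambda_{(j)}e_j$ in the diagonalizable case and on the associated graded in general. The left-invariant $k$-forms then split into weights $w_S=\sum_{j\in S}\lambda_{(j)}$ with $|S|=k$, and I expect the degree-$k$ reduced $L^p$-cohomology to change behaviour exactly at the thresholds $p=\mathrm{tr}(\alpha)/w_S$; the set of all such thresholds, across all degrees, is a quasi-isometry invariant. The decisive case is codimension one, $|S|=n-1$, where $w_S=\mathrm{tr}(\alpha)-\lambda_{(j)}$, so the thresholds $\mathrm{tr}(\alpha)/(\mathrm{tr}(\alpha)-\lambda_{(j)})$ form precisely the multiset $\{\lambda_{(j)}\}_j$ of eigenvalues of $\alpha$, each with multiplicity $\dim V_i$, recorded as scale-invariant ratios $\lambda_{(j)}/\mathrm{tr}(\alpha)$. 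Matching these multisets for $\alpha$ and $\beta$ and then rescaling $\beta$ by $s=\mathrm{tr}(\alpha)/\mathrm{tr}(\beta)$ yields equality of the characteristic polynomials.

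The main obstacle is the cohomological computation itself. Away from the Carnot and abelian settings the correct object is not the de Rham complex but a weight-filtered Rumin-type complex adapted to $\alpha$, and one must control torsion, the distinction between reduced and unreduced classes, and the contribution of genuinely non-diagonalizable generalized eigenspaces; any of these could a priori shift or merge the thresholds and destroy the clean codimension-one reading. Proving that each threshold $\mathrm{tr}(\alpha)/(\mathrm{tr}(\alpha)-\lambda_i)$ genuinely occurs, with the correct multiplicity and without spurious cancellation, is the crux — and is exactly the point at which the degree-one argument collapses in the example. If this computation proves intractable in full generality, the fallback would be the direct route through $F$: establish a Pansu–Rademacher differentiation theorem for quasisymmetric maps of these anisotropic boundaries and show the tangent maps are graded isomorphisms respecting every weight, which forces the same numerical conclusion but shifts the difficulty onto the differentiation theory.
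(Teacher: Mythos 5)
The decisive problem is that you never use Theorem \ref{maintheorem}, even though the corollary is stated under its hypotheses and is meant to follow from it in one line. By the theorem there exists $s>0$ such that $\alpha$ and $s\beta$ have the same characteristic polynomial; a diagonalizable endomorphism is determined up to conjugacy by its characteristic polynomial (the eigenvalues together with their multiplicities), so $\alpha$ and $s\beta$ have the same diagonal form, i.e.\ the diagonal forms of $\alpha$ and $\beta$ are proportional. That is the entire intended argument --- the paper literally says the corollary ``follows immediately.'' By ignoring the theorem you have turned a one-line deduction into an independent research program aimed at re-proving spectral invariance from scratch.

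That program, moreover, contains a genuine gap which you yourself flag as ``the crux'': the claim that the degree-$k$ continuous $L^p$-cohomology of a non-Carnot diagonal Heintze group has thresholds exactly at $p=\mathrm{tr}(\alpha)/w_S$, occurring with multiplicity $\dim V_i$ and without cancellation, is proved nowhere --- not in your proposal and not in the references the paper relies on (\cite{Pansu89t} and \cite{Carrasco14} compute only the degree-one spectrum). The paper's own example (Figures \ref{graphs1} and \ref{graphs2}) shows that the known degree-one invariant cannot separate the derivations; whether higher-degree invariants can is exactly what would have to be established, and it is nontrivial: distinct subsets $S$ of equal weight can produce coinciding thresholds, reduced classes can vanish on whole intervals of $p$ (this already happens for real hyperbolic space), and recovering the multiplicities $\dim V_i$ requires more than the mere location of thresholds. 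A proof attempt whose central step is conceded to be possibly intractable, with a fallback (a Pansu--Rademacher differentiation theory for these anisotropic boundaries) that is equally unproven, is a plan rather than a proof. Note finally that the paper's route to Theorem \ref{maintheorem} itself deliberately avoids such cohomological computations: it uses boundary rigidity (Facts 3 and 4) to upgrade the quasisymmetric boundary map to a bi-Lipschitz one, and then runs the normalizer-and-quotient induction of Lemma \ref{premaintheorem}.
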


The proof of Theorem \ref{maintheorem} relies on two important results. The first, is that the boundary extension of any quasi-isometry between two purely real Heintze groups, that are not of Carnot type, induces a homeomorphism between $N_1$ and $N_2$ which preserves a certain foliation \cite[Theorem 1.1]{Carrasco14}. This foliation is given by the left cosets of a proper subgroup of $N_1$; respectively $N_2$. The second, is that such a homeomorphism is necessarily a bi-Lipschitz map when $N_1$ and $N_2$ are endowed with their respective parabolic visual metrics \cite[Theorem 1.1]{LeDonneXie14}. This fact implies that a nicer foliation, given by the left cosets of normal subgroups, is preserved. This allows us to argue by induction.

Our second result shows that the Jordan form of the derivation is a quasi-isometry invariant if we restrict to the class of Heintze groups for which $N$ is the Heisenberg group. 

\begin{theorem}\label{heisenberg}
Let $\alpha$ and $\beta$ be two derivations of the $2n+1$-dimensional Heisenberg algebra $\mathfrak{k}_n$. Denote by $K_n$ the corresponding Heisenberg group. Let $K_n\rtimes_{\alpha}\mathbb{R}$ and $K_n\rtimes_{\beta}\mathbb{R}$ be two purely real quasi-isometric Heintze groups, then there exists $s>0$ such that $\alpha$ and $s\beta$ have the same Jordan canonical form. 
\end{theorem}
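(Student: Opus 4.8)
The plan is to combine Theorem~\ref{maintheorem} with a finer analysis of the preserved foliation structure specific to the Heisenberg algebra. By Theorem~\ref{maintheorem}, a quasi-isometry between $K_n\rtimes_\alpha\R$ and $K_n\rtimes_\beta\R$ already forces $\alpha$ and $s\beta$ to share the same characteristic polynomial for some $s>0$; after rescaling I may assume $s=1$, so $\alpha$ and $\beta$ have the same eigenvalues $0<\lambda_1<\cdots<\lambda_d$ with the same multiplicities. What remains is to recover the sizes of the Jordan blocks within each generalized eigenspace. First I would recall the rigid structure of derivations of $\mathfrak{k}_n$: the center $\mathfrak{z}=\mathrm{Span}(Z)$ is characteristic, so $\alpha Z=\mu Z$ for some eigenvalue $\mu$, and the bracket relation $[X,Y]=Z$ forces $\alpha$ to satisfy $\langle \alpha X, Y\rangle_{\mathrm{bracket}} + \langle X,\alpha Y\rangle_{\mathrm{bracket}} = \mu\,[X,Y]$; that is, the restriction of $\alpha$ to the complement $W$ of the center must be, up to the scalar $\mu/2$, an infinitesimally symplectic map with respect to the symplectic form defined by the bracket. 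This symplectic constraint is what rigidifies the possible Jordan forms and pairs eigenvalues as $\lambda$ and $\mu-\lambda$.

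Next I would exploit the bi-Lipschitz boundary homeomorphism provided by \cite[Theorem 1.1]{LeDonneXie14}, together with the foliation-preservation of \cite[Theorem 1.1]{Carrasco14}, to argue as in the proof of Theorem~\ref{maintheorem} but keeping track of the filtration by generalized eigenspaces more carefully. The key new ingredient is that the induced homeomorphism $\varphi:K_n\to K_n$ respects not just the eigenvalue decomposition but the entire flag of $\alpha$-invariant subgroups, and moreover intertwines the grading in a way compatible with the symplectic pairing on $W$. Concretely, within each generalized eigenspace $V_i$, the Jordan block sizes determine the asymptotics of the parabolic visual (quasi-)metric along the corresponding directions, and the bi-Lipschitz property transfers these asymptotics from the $\alpha$-side to the $\beta$-side. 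I would make this precise by showing that the block structure is detected by the second-order (logarithmic) corrections to the homogeneous metric $\varrho_\alpha$, which a bi-Lipschitz map must preserve.

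The main obstacle will be passing from the invariance of the eigenvalue data to the invariance of the Jordan block sizes, since nilpotent Jordan blocks contribute only logarithmic (hence subexponential) distortions to $\varrho_\alpha$ and are invisible to the coarse $L^p$-cohomological invariants. Here the Heisenberg hypothesis is essential: the symplectic constraint described above severely limits the admissible Jordan forms for a fixed characteristic polynomial, so that in many cases the block sizes are already determined by the eigenvalue multiplicities and the symplectic pairing alone. For the remaining ambiguous cases I would need the sharp bi-Lipschitz analysis on $W$, using that the symplectic form is preserved up to scale by $\varphi$ (because it is intrinsically encoded in the Carnot-type bracket of the first layer), to pin down the nilpotent parts of $\alpha$ and $\beta$ simultaneously. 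I expect the symplectic rigidity to do most of the work, reducing the genuinely metric part of the argument to controlling how a single Jordan chain deforms the visual metric.
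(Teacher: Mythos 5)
Your reduction to equal eigenvalues via Theorem \ref{maintheorem} is correct, and so is your symplectic observation: writing $\mathfrak{k}_n=W\oplus\mathfrak{z}$ and $[A,B]=\omega(A,B)Z$, every derivation satisfies $\omega(\alpha A,B)+\omega(A,\alpha B)=\mu\,\omega(A,B)$, so $\alpha|_W-\tfrac{\mu}{2}\mathrm{Id}$ is infinitesimally symplectic. But this constraint is far weaker than you suggest. It forces the Jordan blocks for $\lambda$ and $\mu-\lambda$ to match, and for the nilpotent Hamiltonian part it only forces odd block sizes to occur with even multiplicity; it does not determine the block sizes inside a generalized eigenspace. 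For instance, on $\mathfrak{k}_2$ with $\alpha Z=2Z$ and the single eigenvalue $1$ on $W$, the nilpotent part of $\alpha|_W$ lies in $\mathfrak{sp}(4,\R)$ and may have block partition $(1,1,1,1)$, $(2,1,1)$, $(2,2)$ or $(4)$, all giving the same characteristic polynomial $(x-1)^4(x-2)$. So the ``remaining ambiguous cases'' you defer to the end are the generic ones; the whole content of the theorem sits there.

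For those cases your two analytic claims are unsupported, and this is the genuine gap. First, a bi-Lipschitz homeomorphism does not a priori preserve ``second-order logarithmic corrections along eigendirections'': it need not respect the eigendirections at all, and no differentiability is available to make it do so (Pansu-type differentiability concerns quasiconformal maps of Carnot groups with Carnot--Carath\'eodory metrics, not boundary homeomorphisms for non-Carnot derivations with parabolic visual quasi-metrics). In fact Lemma \ref{dimdiagpart} of the paper shows that Hausdorff-dimension data are completely blind to the nilpotent part, so any detection of block sizes must be structural rather than asymptotic; in the abelian case such detection is precisely Xie's theorem \cite{XiangdongXie14}, whose proof is long and is not recovered by asserting that a bi-Lipschitz map ``must preserve'' the corrections. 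Second, the claim that $\Phi$ preserves the symplectic form up to scale has no justification for the same reason: $\Phi$ is merely bi-Lipschitz, and the bracket is a first-order object it cannot see. The paper's proof avoids both claims: it uses the preserved foliations by cosets of $H_\alpha$ and $U_\alpha$ ((Fact 3) and Lemma \ref{dimclass}), the fact that bi-Lipschitz maps respect finiteness of Hausdorff distance between cosets (Lemma \ref{distancecosets}), and then restricts to these subgroups and passes to quotients by their normalizers, inducting on the number of eigenvalues with base cases Lemmas \ref{twoeigenvalues} and \ref{threeeigenvalues}, all abelian pieces being handled by citing \cite{XiangdongXie14}. To salvage your outline you would need to replace your two metric claims by this coset-and-quotient machinery, or else genuinely prove bi-Lipschitz rigidity of the logarithmic corrections, which is not easier than the theorem itself.
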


This result improves the work done in \cite{XiangdongXie14c,XiangdongXie14b}. The key point in the proof of Theorem \ref{heisenberg} is that the relative simplicity of the algebraic structure of the Heisenberg group, allows us to explicitly compute the normal subgroups that define the invariant foliations. We can then perform a finer induction argument.

\section{Preliminaries on Heintze groups and their quasi-isometries\label{preliminaries}}

In this section we introduce the notations used throughout this note and recall some known results which are used in the proofs of Theorems \ref{maintheorem} and \ref{heisenberg}. 

Let $\mathrm{Der}(\mathfrak{n})$ be the Lie algebra of derivations of $\mathfrak{n}$. The group structure of $X_\alpha$ is given by an expanding action $\tau:\R\to \text{Aut}(N)$, where $\tau$ satisfies $d_e\tau(t)=e^{t\alpha}$. We will identify the subgroup $N\times \{0\}$ with $N$. Let $\mathcal{B}=\{\partial_1,\ldots,\partial_n\}$ be a basis of $T_eN$ on which $\alpha$ assumes its Jordan canonical form, and denote by $\partial_t$ the tangent vector at $s=0$ to the vertical curve $s\mapsto (e,s)$. By \cite{Heintze74}, $X_\alpha$ admits a left-invariant metric of negative curvature. In particular, $X_\alpha$ is Gromov hyperbolic when equipped with any left-invariant metric. We will restrict our attention to metrics of the form
\begin{equation}\label{fixmetric}g_{(x,t)}=\tau(t)^*g_{N,x}\oplus dt^2,\end{equation}
where $g_N$ is a left invariant metric on $N$. The vertical lines $\gamma_x:t\mapsto (x,t)$ are unit-speed geodesics, and they define when $t\to+\infty$ a boundary point denoted $\infty$. The boundary at infinity of $X_{\alpha}$ is a topological $n$-sphere, which we identify with the one-point compactification $N\cup\{\infty\}$.

In $N=\partial X_{\alpha}\setminus \{\infty\}$, we can define a bi-Lipschitz class of quasi-metrics, called parabolic visual metrics, which are characterized by the following properties:
\begin{enumerate}[label={(V}\arabic*{)},leftmargin=1.5cm]
\item $\varrho:N\times N\to \R_+$ is continuous;
\item $\varrho$ is left invariant;
\item and $\tau(t)^*\varrho=e^{t}\varrho$ for all $t\in\R$. 
\end{enumerate}
Notice that if a quasi-metric $\varrho$ satisfies (V1), (V2) and (V3) above, then for any $x_0\in N$ the function $x\mapsto \varrho(x_0,x)$ is proper. 

A particular choice of visual metric is the following. The orbits of $N$, the sets $N\times\{t\}$, correspond to the horospheres centered at $\infty$. We denote by $d_t$ the Riemannian distance induced on $N\times \{t\}$. Then
\begin{equation}\label{quasimetric}
\varrho(x,y)= e^{t},\text{ where }t=\inf\left\{s\in\R:d_s\left(\gamma_x(s),\gamma_y(s)\right)\leq 1\right\},\ x,y\in N,
\end{equation} 
defines a visual metric on $N$. We refer to \cite{Hersonsky-Paulin97,Hamenstadt89} for more details. Notice that $(N,\varrho)$ is Ahlfors regular of Hausdorff dimension $\mathrm{tr}(\alpha)$. When necessary, we will write $\varrho_{\alpha}$ to indicate the dependence of the visual metric on the derivation $\alpha$. Notice that the bi-Lipschitz class of the visual metrics is unchanged when the metric $g_N$ of $N$ is changed. This is not the case for changes involving the vertical direction $\partial_t$.

Denote by $0<\lambda_1<\cdots<\lambda_d$ the eigenvalues of $\alpha$, and let $\mathfrak{n}=V_1\oplus\cdots\oplus V_d$ be the direct sum decomposition of $\mathfrak{n}$ into the generalized eigenspaces of $\alpha$. One easily checks that
\begin{equation}\label{ursula}
[V_i,V_j]\text{ is }\begin{cases}\text{ contained in } V_k \text{ if }\lambda_k=\lambda_i+\lambda_j,\\ \text{ trivial otherwise.}\end{cases}
\end{equation}
In particular, $V_d$ is always contained in the center of $\mathfrak{n}$. This also implies that if $\alpha=\delta+\nu$ is the decomposition of $\alpha$ into its diagonal and nilpotent parts, then both $\delta$ and $\nu$ are derivations of $\mathfrak{n}$.

A special role in this theory is played by the purely real Heintze groups of Carnot type. This is the case when the Lie algebra spanned by the eigenspace $W_1=\mathrm{Ker}(\alpha-\lambda_1)$ coincides with $\mathfrak{n}$. More precisely, if we let $W_{i+1}=[W_1,W_i]$, then these subspaces form a grading of $\mathfrak{n}$; that is
$$\mathfrak{n}=\bigoplus_{i=1}^dW_i\text{ and }[W_i,W_j]\subset W_{i+j},$$
and we have $\alpha=i\lambda_1$ on $W_i$. Thus $\alpha$ is diagonalizable, its eigenvalues are $\lambda_1,2\lambda_1,\ldots,d\lambda_1$, and $W_1,\ldots, W_d$ are its eigenspaces. Any norm on $W_1$ induces a Carnot-Carath\'eodory metric $\varrho_{CC}$ on $N$ that satisfies properties (V1), (V2) and (V3) above. In particular, this distance makes $N$ a geodesic space. Examples of Carnot type groups are the symmetric spaces of rank one.

When $X_\alpha$ is not of Carnot type, we denote by $m_{ir}$, for $i=1,\ldots,d$ and $r=1,\ldots,n_i$, the sizes of the Jordan blocks corresponding to the generalized eigenspace $V_i$, and let $m_i=\max\{m_{ir}:r=1,\ldots,n_i\}$. We consider the Lie algebras 
\begin{equation}\label{algebrah}\mathfrak{u}_\alpha=\mathrm{LieSpan}(V_1)\text{ and }\mathfrak{h}_{\alpha}=\mathrm{LieSpan}(V_1^*),\end{equation}
where $V_1^*$ is the vector space spanned by the $\lambda_1$-eigenvectors corresponding to Jordan blocks of maximal size $m_1$. Notice that $\mathfrak{h}_\alpha$ is a proper sub-algebra of $\mathfrak{n}$. Finally, let $U_\alpha$ and $H_{\alpha}$ be the corresponding connected Lie subgroups of $N$ whose Lie algebras are $\mathfrak{u}_\alpha$ and $\mathfrak{h}_{\alpha}$ respectively.

Suppose $X_\alpha=N_1\rtimes_\alpha\R$ and $X_\beta=N_2\rtimes_\beta\R$ are two quasi-isometric purely real Heintze groups. Then there exists a quasi-isometry between them whose boundary extension sends $\infty_1$ to $\infty_2$ \cite[Lemma 6.D.1]{Cornulier13}. In particular, there exists a quasi-symmetric map between $(N_1,\varrho_{\alpha})$ and $(N_2,\varrho_{\beta})$. Conversely, any such quasi-symmetric map induces a quasi-isometry between $X_\alpha$ and $X_\beta$ \cite{BonkSchramm00,FredericPaulin96}.

\medskip
\noindent Let us end this section by outlining the following known results which will be important in the proofs of Theorems \ref{maintheorem} and \ref{heisenberg}:
\begin{enumerate}[label={(Fact }\arabic*{)},leftmargin=1.5cm]
\item A Carnot type Heintze group cannot be quasi-isometric to a non-Carnot type one, see \cite[Corollary 1.9]{Carrasco14}.
\item If two Carnot type Heintze groups are quasi-isometric, then they are isomorphic, see \cite{PierrePansu89m}.
\item If $\Phi:(N_1,\varrho_1)\to(N_2,\varrho_2)$ is a quasisymmetric map between the boundaries of non-Carnot type Heintze groups, then $\Phi$ sends the left cosets of $H_\alpha$ (resp. $U_\alpha$) into the left cosets of $H_\beta$ (resp. $U_\beta$), see \cite[Theorem 1.4]{Carrasco14}.
\item If $\Phi$ is like in (Fact 3) and the smallest eigenvalue of $\alpha$ and $\beta$ coincide, then $\Phi$ is a bi-Lipschitz map, see \cite[Theorem 1.1]{LeDonneXie14} and \cite[Corollary 1.8]{Carrasco14}.
\end{enumerate}
This last point allows us to use bi-Lipschitz invariants to study quasi-isometries between non-Carnot type Heintze groups. It is a consequence of (Fact 3) and a general theorem proved by Le Donne and Xie on rigidity of fibre-preserving quasisymmetric maps \cite{LeDonneXie14}.

\section{Hausdorff distance between left cosets}

In this section we fix a parabolic visual metric $\varrho_{\alpha}$ on $N$, and let $\mathfrak{h}$ be a proper sub-algebra of $\mathfrak{n}$. The goal is to determine which are the lefts cosets of $H$, the connected subgroup of $N$ whose Lie algebra is $\mathfrak{h}$, that are at finite Hausdorff distance from $H$.

The normalizer of $\mathfrak{h}$ is the subalgebra $\mathfrak{N}(\mathfrak{h})=\{X\in\mathfrak{n}:[X,\mathfrak{h}]\subset \mathfrak{h}\}$. We denote by $N(H)$ the corresponding normalizer of $H$ in $N$. Also, recall that the Hausdorff distance between two subsets of $N$ is given by
$$\mathrm{dist}(A,B)=\max\left\{\sup_{a\in A}\mathrm{dist}(a,B),\sup_{b\in B}\mathrm{dist}(b,A)\right\},$$
where $\mathrm{dist}(a,B)=\inf_{b\in B}\varrho_{\alpha}(a,b)$. We first prove the following estimate for the visual metric $\varrho_{\alpha}$.

\begin{lemma}\label{distanceorigin}
Let $\langle \cdot,\cdot\rangle$ be an inner product on $\mathfrak{n}$. For any $\mu>\lambda_d$, there exists a constant $c>0$ such that
$$c\,\|X\|\leq \varrho(e,\exp{X})^{\mu}$$
for all $X\in \mathfrak{n}$ with $\varrho_{\alpha}(e,\exp{X})\geq 1$.
\end{lemma}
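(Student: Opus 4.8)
The plan is to exploit the scaling relation (V3) to transport the point $\exp X$ back onto the compact unit sphere of $\varrho$, and then to recover the size of $X$ by controlling the operator norm of $e^{t\alpha}$. The key observation is that $\tau(t)$ fixes $e$, commutes with the exponential map (so that $\tau(t)(\exp X)=\exp(e^{t\alpha}X)$), and satisfies $\varrho(e,\tau(t)y)=e^{t}\varrho(e,y)$ by (V3). Moreover, since $x\mapsto\varrho(e,x)$ is proper, the unit sphere $S=\{y\in N:\varrho(e,y)=1\}$ is compact, hence $\log(S)$ is a bounded subset of $\mathfrak{n}$: there is a constant $M>0$, depending only on $\varrho$ and the inner product, with $\|\log y\|\leq M$ for every $y\in S$.

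The first step is the reduction to the sphere. Given $X\in\mathfrak{n}$ with $r:=\varrho(e,\exp X)\geq 1$, I would set $t=\log r\geq 0$ and let $y_0=\tau(-t)(\exp X)=\exp(e^{-t\alpha}X)$. By the scaling relation, $\varrho(e,y_0)=e^{-t}r=1$, so $y_0\in S$ and therefore $\|e^{-t\alpha}X\|=\|\log y_0\|\leq M$. Applying $e^{t\alpha}$ and passing to operator norms then gives $\|X\|=\|e^{t\alpha}(e^{-t\alpha}X)\|\leq \|e^{t\alpha}\|\,M$, where $\|e^{t\alpha}\|$ denotes the operator norm induced by $\langle\cdot,\cdot\rangle$.

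The second step is the growth estimate for $e^{t\alpha}$. Since $\lambda_d$ is the largest eigenvalue of $\alpha$ and $\mu>\lambda_d$, there is a constant $C_\mu>0$ such that $\|e^{t\alpha}\|\leq C_\mu e^{\mu t}$ for all $t\geq 0$. Substituting $e^{\mu t}=r^{\mu}$ into the previous bound yields $\|X\|\leq C_\mu M\, r^{\mu}=C_\mu M\,\varrho(e,\exp X)^{\mu}$, so the claim follows with $c=(C_\mu M)^{-1}$.

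I expect the only genuinely technical point to be the operator-norm bound $\|e^{t\alpha}\|\leq C_\mu e^{\mu t}$, and this is precisely where the strict inequality $\mu>\lambda_d$ is indispensable. If $\alpha$ is not semisimple, its Jordan blocks at the top eigenvalue make $\|e^{t\alpha}\|$ grow like $t^{m_d-1}e^{\lambda_d t}$; the extra polynomial factor cannot be absorbed at the exponent $\lambda_d$ itself, but for any $\mu>\lambda_d$ one has $t^{m_d-1}e^{(\lambda_d-\mu)t}\to 0$ as $t\to+\infty$, so $\sup_{t\geq 0}e^{-\mu t}\|e^{t\alpha}\|<\infty$. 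Everything else is a direct application of the defining properties of the visual metric.
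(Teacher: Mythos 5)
Your proof is correct and follows essentially the same strategy as the paper's: rescale by $\tau$ to land on the compact unit sphere of $\varrho$, bound the logarithm there by compactness, and then absorb the polynomial Jordan-block correction $t^{m_d-1}e^{\lambda_d t}$ into $e^{\mu t}$ using $\mu>\lambda_d$. The only difference is notational — the paper scales down with $t\leq 0$ and states the estimate as a pointwise lower bound on $\|e^{t\alpha}X\|$, whereas you scale with $t=\log r\geq 0$ and phrase it as an operator-norm upper bound on $e^{t\alpha}$, which are equivalent formulations of the same inequality.
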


\begin{proof} Let us define $C=\max\{\|X\|: X\in\mathfrak{n}, \varrho_{\alpha}(e,\exp{X})=1\}$, which is finite because $\{x\in N: \varrho_{\alpha}(e,x)=1\}$ is a compact set. Fix $X\in\mathfrak{n}$ with $\varrho_{\alpha}(e,\exp{X})\geq 1$, and let $x=\exp{X}$. We choose $t\in\mathbb{R}$ such that 
$$\varrho_{\alpha}(e,\tau(t)x)=e^t\varrho_{\alpha}(e,x)=1.$$
Notice that $t\leq 0$. Since $\tau(t)x=\exp(e^{t\alpha}X)$, we have $\|e^{t\alpha}X\|\leq C$. Also, the following lower bound holds:
$$\|e^{t\alpha}X\|\geq \frac{e^{t\lambda_d}\|X\|}{p(t)},$$
where $p(s)\sim c_1|s|^{n_d-1}$ when $s\to-\infty$, $c_1>0$ is a constant, and $n_d$ is the dimension of the generalized eigenspace associated to $\lambda_d$. Given $\mu>\lambda_d$, we can choose a constant $c_2>0$ such that
$$\frac{e^{s\lambda_d}}{p(s)}\geq c_2 e^{s\mu}\ \text{ for all } s\leq 0.$$
From the above inequalities, we get $c_2 e^{t\mu}\|X\|\leq C$, and then
$$\frac{c_2}{C}\|X\| \leq \varrho_{\alpha}(e,\exp{X})^{\mu}.$$   
This finishes the proof.
\end{proof}

Suppose that $K$ is a connected Lie subgroup of $N$, and let $\mathfrak{k}$ be its Lie algebra. Since $\varrho_{\alpha}$ is left invariant, for any $x\in N$, we have 
$$\mathrm{dist}_H(Kx,K)=\mathrm{dist}(x,K).$$
In particular, if $x\in N(K)$, then 
\begin{equation}\label{direct}
\mathrm{dist}_H(xK,K)=\mathrm{dist}_H(Kx,K)=\mathrm{dist}(x,K)<\infty.
\end{equation}
This shows that if $x,y\in N(K)$, the the left cosets $xK$ and $yK$ are at finite Hausdorff distance. Moreover, (\ref{direct}) implies that $xK$ and $yK$ are parallel in the following sense:
\begin{equation}\label{parallels}\mathrm{dist}_H(xK,yK)=\mathrm{dist}(xk,yK)=\mathrm{dist}(xK,yk)\end{equation}
for all $k\in K$.

Suppose in addition that $K$ is a $\tau$-invariant normal subgroup. We can define using (\ref{parallels}) a quasi-metric $\tilde{\varrho}_\alpha$ on the quotient $N/K$, and with this quasi-metric, the canonical projection $\pi:N\to N/K$ is a $1$-Lipschitz map. Consider $\tilde\tau:\R\to \mathrm{Aut}(N/K)$ the morphism induced on the quotient by $\tau$. We identify the Lie algebra $\mathrm{Lie}(N/K)$ with $\mathfrak{n}/\mathfrak{k}$ via the isomorphism $d_e\pi$. With this identification, the action $\tilde{\tau}$ is generated by the derivation
$$\tilde{\alpha}(X+\mathfrak{k})=\alpha(X)+\mathfrak{k},\ \text{ for }X\in \mathfrak{n}.$$
Denote by $p_{\alpha}$ and $p_{\tilde{\alpha}}$ the characteristic polynomials of $\alpha$ and $\tilde{\alpha}$. Since $p_{\tilde{\alpha}}$ divides $p_{\alpha}$, the eigenvalues of $\tilde{\alpha}$ are all positive.

One can check that the quotient quasi-metric $\tilde\varrho_\alpha$ satisfies the conditions (V1), (V2) and (V3) of Section \ref{preliminaries}. In particular, $\tilde\varrho_\alpha$ is in the same bi-Lipschitz class as the parabolic visual metrics on the boundary of $N/K\rtimes_{\tilde{\alpha}}\R$.

\begin{lemma}\label{distancecosets}
Let $H$ be a Lie subgroup of $N$. Two left cosets of $H$ are at finite Hausdorff distance if, and only if, they are contained in the same left coset of $N(H)$. Moreover, in that case they are parallel in the sense of (\ref{parallels}).
\end{lemma}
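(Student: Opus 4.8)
The \emph{if} direction, together with the \emph{moreover} clause, is immediate from what precedes. Since $H\subseteq N(H)$, two cosets $xH,yH$ lie in a common left coset of $N(H)$ exactly when $z:=y^{-1}x\in N(H)$; by left invariance $\mathrm{dist}_H(xH,yH)=\mathrm{dist}_H(zH,H)$, which by (\ref{direct}) (applied with $K=H$) is finite, and (\ref{parallels}) gives the asserted parallelism. So the whole content of the lemma is the converse: \emph{if $\mathrm{dist}_H(zH,H)<\infty$ then $z\in N(H)$}. Throughout, $H$ is the connected---hence closed and simply connected---subgroup with Lie algebra $\mathfrak{h}$, $\pi\colon N\to N/H$ is the projection, and for a coset $\xi=gH$ we write $D(\xi)=\mathrm{dist}(e,gH)=\inf_{h\in H}\varrho(e,gh)$, which is a well-defined function on $N/H$.

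The plan is to turn finite Hausdorff distance into the boundedness of an orbit in $N/H$, and then to show that a bounded orbit must be a single point. Let $H$ act on $N/H$ by left translation, $h\cdot(gH)=hgH$. Using left invariance, for $h\in H$,
$$\mathrm{dist}(h,zH)=\inf_{h'\in H}\varrho\big(e,h^{-1}zh'\big)=D\big(h^{-1}\cdot(zH)\big).$$
Now $\mathrm{dist}_H(zH,H)<\infty$ forces $\sup_{h\in H}\mathrm{dist}(h,zH)\le R$ for some $R$, so the orbit $\mathcal{O}=\{h\cdot(zH):h\in H\}$ satisfies $\sup_{\xi\in\mathcal{O}}D(\xi)\le R$, because $h\mapsto h^{-1}$ permutes $H$.

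Two facts then finish the argument. First, $D$ is \emph{proper}: if $D(gH)\le R$, choose $h$ with $\varrho(e,gh)\le R+1$; since $x\mapsto\varrho(e,x)$ is proper and continuous, the set $B=\{x:\varrho(e,x)\le R+1\}$ is compact, whence $gH=\pi(gh)\in\pi(B)$ with $\pi(B)$ compact. Thus $\{D\le R\}$ is precompact, and so is $\mathcal{O}$. Second, since $N$ is simply connected and nilpotent, its left-translation action on $N/H$ is given by polynomial maps in global (Malcev) coordinates; hence for each $Y\in\mathfrak{h}$ the curve $s\mapsto \exp(sY)\cdot(zH)\in\mathcal{O}$ is a polynomial curve in $\R^{\dim(N/H)}$ with precompact, hence bounded, image, and is therefore constant. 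Consequently $\exp(sY)zH=zH$ for all $s\in\R$ and all $Y\in\mathfrak{h}$, which yields $\mathrm{Ad}(z^{-1})\mathfrak{h}\subseteq\mathfrak{h}$ and, by invertibility of $\mathrm{Ad}(z^{-1})$, the equality $\mathrm{Ad}(z^{-1})\mathfrak{h}=\mathfrak{h}$. Thus $z^{-1}Hz=H$, i.e.\ $z\in N(H)$, as desired.

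The delicate points are the reformulation and the properness of $D$; once one works intrinsically on $N/H$, the rest is short. The temptation I would avoid is to fix a linear complement $\mathfrak{m}$ to $\mathfrak{h}$ and try to bound the $\mathfrak{m}$-component of $\log(h^{-1}zh)$ directly: the Baker--Campbell--Hausdorff cross terms bracketing against $\mathfrak{h}$ become unbounded precisely when the nearest point of $H$ escapes to infinity, and controlling them would seem to require a separate induction along the lower central series. Passing to $N/H$ dissolves this difficulty, since there Lemma \ref{distanceorigin} enters only through the properness of $\varrho$ and the nilpotent action is manifestly polynomial.
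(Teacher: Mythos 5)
Your proof is correct, but it takes a genuinely different route from the paper's. The paper proves the converse by induction on the nilpotency class of $N$: it passes to the quotient by the center $Z$, splits into the cases $\pi(x)\notin N(\pi(H))$ (handled by the induction hypothesis plus the fact that $\pi$ is $1$-Lipschitz) and $\pi(x)\in N(\pi(H))$, and in the latter case runs a Baker--Campbell--Hausdorff computation to write $h^{-1}x^{-1}h_0^{(t)}x=\exp\bigl(Y(t)+tW\bigr)$ with a fixed $W\notin\mathfrak{h}$, then invokes the quantitative estimate of Lemma \ref{distanceorigin} to get $\mathrm{dist}\bigl(H,x^{-1}Hx\bigr)\geq c\,(t\|W\|)^{1/\mu}\to\infty$. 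You avoid both the induction and Lemma \ref{distanceorigin} entirely: you convert finite Hausdorff distance into boundedness of the $H$-orbit of the point $zH$ in the homogeneous space $N/H$, use only the properness of $x\mapsto\varrho(e,x)$ (recorded in Section \ref{preliminaries}) to see that sublevel sets of $D$ are precompact, and then exploit polynomiality of the $N$-action on $N/H$ in Malcev coordinates, so that each bounded one-parameter orbit $s\mapsto\exp(sY)\cdot(zH)$ is a bounded polynomial curve, hence constant, yielding $\mathrm{Ad}(z^{-1})\mathfrak{h}=\mathfrak{h}$. What the paper's argument buys is a quantitative divergence rate and self-containedness within the paper; what yours buys is a softer, shorter, induction-free argument that, notably, never uses the quasi-triangle inequality. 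The price is that your load-bearing inputs --- existence of global polynomial coordinates on $N/H$ with polynomial projection and action (Malcev coordinates, as in Corwin--Greenleaf), and the identification of $\{g\in N:g^{-1}Hg=H\}$ with the connected subgroup corresponding to $\mathfrak{N}(\mathfrak{h})$ --- are structure theory of simply connected nilpotent groups that the paper does not otherwise invoke, so they should be cited rather than asserted; the second of these is implicitly used by the paper's own proof as well (in deducing from $x\notin N(H)$ the existence of $h_0\in H$ with $x^{-1}h_0x\notin H$), so it is a point to make explicit, not a gap.
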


This lemma is proved for Carnot groups with Carnot-Carath\'eodory metrics in \cite[Lemma 4.4]{LeDonneXie14}. 

\begin{proof}
The direct implication was already proved in (\ref{direct}), so let us prove the converse. We will argue by induction on the nilpotency of $N$. The Abelian case is obvious because $N(H)=N$. Let us assume the claim true for nilpotent groups of nilpotency less than $m$.

Let $Z$ be the center of $N$. Then the quotient group $N/Z$ has nilpotency at most $(m-1)$. Let us take two left cosets of $H$, contained in different left cosets of $N(H)$. Since $\varrho_{\alpha}$ is left-invariant, we may assume that they are $H$ and $xH$ with $x\notin N(H)$. This means that there is $h_0\in H$ such that $x^{-1}h_0x\notin H$. Let $\pi:N\to N/Z$ be the canonical projection, there are two posibilities:

\begin{enumerate}
\item $\pi(x)\notin N(\pi(H))$; or
\item $\pi(x)\in N(\pi(H))$ and therefore $\pi(x^{-1}hx)\in\pi(H)$ for all $h\in H$.
\end{enumerate}

We apply the induction hypothesis in the first case: $\pi(x)\pi(H)=\pi(xH)$ and $\pi(H)$ must be at infinite Hausdorff distance. Since $\pi$ is a contraction, the Hausdorff distance between the cosets $H$ and $xH$ is also infinite.

Suppose the second case holds. Let $X,Y_0\in \mathfrak{n}$ be such that $\exp{X}=x$ and $\exp{Y_0}=h_0$, and define $h_0^{(t)}:=\exp(tY_0)$ for $t\in\R$. Notice that $h_0^{(t)}$ is also in $H$. For each $t\in\mathbb{R}$, there exists $h_t\in H$ such that $h_t^{-1}x^{-1}h_0^{(t)}x\in Z$. This is because $\pi(x^{-1}h_0^{(t)}x)\in\pi(H)$. In particular, $x^{-1}h_0^{(t)}x\in N(H)$.

By the Baker-Campbell-Hausdorff formula, since
$$x^{-1}h_0x=\exp \left( \sum_{j=0}^{\infty}\frac{\mathrm{ad}_{X}^j(Y_0)}{j!} \right)\notin H,$$
we then have that
$$W:=\sum_{j=0}^{\infty}\frac{\mathrm{ad}_{X}^j(Y_0)}{j!}-Y_0 \notin \mathfrak{h}.$$
Also, notice that for any $t$ we have
$$x^{-1}h_0^{(t)}x=\exp \left( \sum_{j=0}^{\infty}\frac{\mathrm{ad}_{X}^j(tY_0)}{j!}\right)=\exp(tY_0+tW).$$
Let $h$ be any point in $H$, and consider $Y\in \mathfrak{h}$ such that $\exp{Y}=h$. Then, since $tY_0+tW$ is in $\mathfrak{N}(\mathfrak{h})$, we can write
$$h^{-1}x^{-1}h_0^{(t)}x=\exp(Y(t)+tW)$$
with $Y(t)\in\mathfrak{h}$ and $tW\notin\mathfrak{h}$.

Let us endow $\mathfrak{n}$ with an inner product making $W$ orthogonal to $\mathfrak{h}$. Then 
$$\|Y(t)+tW\|^2=\|Y(t)\|^2+\|tW\|^2\geq t^2\|W\|^2.$$ 
Since $W$ is a fixed non trivial vector, we can find a big enough uniform $t$, such that $\varrho_{\alpha}(\exp(Y(t)+tW),e)\geq 1$. We fix any $\mu>\lambda_d$, then by Lemma \ref{distanceorigin}, there exists $c>0$ such that
\begin{align*}
\inf_{h\in H} \varrho_{\alpha}(h,x^{-1}h_0^{(t)}x)&\geq \inf_{h\in H}c\|Y(t)+tW\|^{\frac{1}{\mu}} \geq c(t\|W\|)^{\frac{1}{\mu}}.
\end{align*}
This implies that
$$\mathrm{dist}_{H}(xH,Hx)=\mathrm{dist}_{H}(H,x^{-1}Hx)\geq \sup_{t\geq 0} c(t\|W\|)^{\frac{1}{\mu}}=\infty.$$
Recall that by (\ref{direct}), $H$ and $Hx$ are always at finite Hausdorff distance. We then conclude that $\mathrm{dist}_H(xH,H)=\infty$.
\end{proof}

\section{Proof of Theorem \ref{maintheorem}}

By (Fact 1) and (Fact 2), we can suppose that $X_\alpha$ and $X_\beta$ are not of Carnot type. We may also assume, after multiplying by a positive number, that $\alpha$ and $\beta$ have the same smallest eigenvalue. Let $F:X_\alpha\to X_\beta$ be a quasi-isometry, and let $\Phi$ be its boundary extension. By \cite[Lemma 6.D.1]{Cornulier13}, we may assume that $\Phi$ sends $\infty_1$ to $\infty_2$. By (Fact 4), we know that $\Phi:(N_1,\varrho_{\alpha})\to (N_2,\varrho_{\beta})$ is a bi-Lipschitz homeomorphism. Theorem \ref{maintheorem} follows from the next lemma.

\begin{lemma}\label{premaintheorem}
Let $N_1$ and $N_2$ be two simply connected nilpotent Lie groups and $\alpha\in \mathrm{Der}(n_1),\ \beta\in \mathrm{Der}(n_2)$ with positive eigenvalues. If $\Phi:(N_1,\varrho_{\alpha})\to (N_2,\varrho_{\beta})$ is a bi-Lipschitz homeomorphism, then $\alpha$ and $\beta$ have the same characteristic polynomial.
\end{lemma}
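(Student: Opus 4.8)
The plan is to argue by induction on $\dim N_1+\dim N_2$, reducing the given bi-Lipschitz map to bi-Lipschitz maps between the boundaries of strictly smaller Heintze groups. First I would dispose of the Carnot cases. Since a bi-Lipschitz map is quasisymmetric, $\Phi$ induces a quasi-isometry between $X_\alpha$ and $X_\beta$, so by (Fact 1) the two groups are simultaneously of Carnot type or not. If both are of Carnot type, (Fact 2) provides an isomorphism $X_\alpha\cong X_\beta$; combined with the fact that bi-Lipschitz maps preserve Hausdorff dimension, which gives $\mathrm{tr}(\alpha)=\mathrm{tr}(\beta)$ and thereby fixes the overall scale, the rigid grading recalled in Section \ref{preliminaries} forces $\alpha$ and $\beta$ to have the same characteristic polynomial. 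The substantial case is thus when both groups are non-Carnot, where (Fact 3) is available.

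In the non-Carnot case I would transport a distinguished subgroup across $\Phi$. After composing with left translations, permissible by the left-invariance (V2), I may assume $\Phi(e)=e$. By (Fact 3), $\Phi$ carries the coset foliation of $H_\alpha$ to that of $H_\beta$, so $\Phi(H_\alpha)=H_\beta$. Next I would use that a bi-Lipschitz map preserves finite Hausdorff distance of subsets, together with Lemma \ref{distancecosets}: the union of those cosets of $H_\alpha$ lying at finite Hausdorff distance from $H_\alpha$ is exactly the normalizer $N(H_\alpha)$, and likewise on the target. Hence $\Phi$ restricts to a bi-Lipschitz homeomorphism $N(H_\alpha)\to N(H_\beta)$ carrying the $H_\alpha$-foliation to the $H_\beta$-foliation. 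Since $\mathfrak h_\alpha=\mathrm{LieSpan}(V_1^*)$ is $\alpha$-invariant (the subalgebra generated by an $\alpha$-invariant space is $\alpha$-invariant, because $\alpha$ is a derivation, and $V_1^*$ is an $\alpha$-invariant eigenspace by \ref{ursula}), both $H_\alpha$ and $N(H_\alpha)$ are $\tau$-invariant, and the restriction of $\varrho_\alpha$ to a $\tau$-invariant subgroup is again a parabolic visual metric for the corresponding sub-derivation. Inside $N(H_\alpha)$ the subgroup $H_\alpha$ is normal by definition of the normalizer, so the quotient construction described before Lemma \ref{distancecosets} applies, yielding an induced bi-Lipschitz map between the quotient boundaries $(N(H_\alpha)/H_\alpha,\tilde\varrho_\alpha)$ and $(N(H_\beta)/H_\beta,\tilde\varrho_\beta)$, whose derivations act on strictly smaller nilpotent algebras. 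Applying the induction hypothesis both to $\Phi|_{H_\alpha}\colon H_\alpha\to H_\beta$ and to this quotient map, and multiplying the resulting characteristic polynomials, shows that $\alpha$ and $\beta$ have the same characteristic polynomial on the normalizer subalgebras $\mathfrak N(\mathfrak h_\alpha)$ and $\mathfrak N(\mathfrak h_\beta)$.

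The hard part, where I expect the real difficulty to lie, is twofold. First, the scheme above controls only the restriction of $\alpha$ to $\mathfrak N(\mathfrak h_\alpha)$, and since the normalizer of $H_\alpha$ need not be normal in $N_1$ one cannot directly pass to the global quotient $N_1/N(H_\alpha)$; the delicate point is to upgrade the preserved foliation to one by cosets of a genuine proper, nontrivial, $\tau$-invariant \emph{normal} subgroup, so that the restriction-to-subgroup and the passage-to-quotient of Section \ref{preliminaries} both apply globally and the two characteristic polynomials multiply exactly to $p_\alpha$ and $p_\beta$. I would attempt this by iterating the normalizer construction, exploiting that in a nilpotent group a proper subgroup is strictly contained in its normalizer, and by combining the $H_\alpha$- and $U_\alpha$-foliations of (Fact 3) with the ideal $V_2\oplus\cdots\oplus V_d$, which contains $[\mathfrak n,\mathfrak n]$ by \ref{ursula} and is therefore a $\tau$-invariant normal subgroup, checking that $\Phi$ respects its cosets. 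Second, one must secure the base case (the abelian, one-parameter situation, where the metric is a coordinatewise snowflake and the exponent equals the eigenvalue) and guarantee that each peeled-off subgroup is proper and nontrivial so the induction terminates; here the non-Carnot hypothesis is essential, since it ensures $H_\alpha$ is a proper subalgebra and that the normalizer machinery produces a genuine reduction. Throughout, the identity $\mathrm{tr}(\alpha)=\mathrm{tr}(\beta)$ coming from equality of Hausdorff dimensions supplies the bookkeeping that forces the recovered eigenvalue data to assemble into the full characteristic polynomial rather than merely a proportional one.
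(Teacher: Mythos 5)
Your reduction steps are sound as far as they go, and your treatment of the Carnot case (Fact 1, Fact 2, plus equality of Hausdorff dimensions to pin down the scale) is actually more explicit than what the paper writes; it is genuinely needed, since the lemma as stated does not exclude Carnot type and the sub- and quotient derivations produced by any such induction can be of Carnot type even when $\alpha$ and $\beta$ are not. The first normalizer step is also correct: (Fact 3) gives $\Phi(H_\alpha)=H_\beta$ after a translation, Lemma \ref{distancecosets} together with the fact that a bi-Lipschitz map preserves finiteness of Hausdorff distances gives $\Phi(N(H_\alpha))=N(H_\beta)$, and restriction plus quotient plus induction gives equality of the characteristic polynomials of $\alpha$ and $\beta$ restricted to $\mathfrak{N}(\mathfrak{h}_\alpha)$ and $\mathfrak{N}(\mathfrak{h}_\beta)$.

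However, the proof is not complete: the step you flag as ``the hard part'' is precisely the step the paper's proof supplies, and your proposed ways around it are either not carried out or would fail. The missing idea is to iterate the normalizer construction all the way up: set $H_0^\alpha=H_\alpha$ and $H_{j+1}^\alpha=N(H_j^\alpha)$. Since in a nilpotent group every proper subgroup is strictly contained in its normalizer, the tower terminates at $H_k^\alpha=N_1$ for some finite $k$ (and similarly for $\beta$). Climbing the tower with Lemma \ref{distancecosets} --- at each level two cosets of $H_j^\alpha$ are at finite Hausdorff distance if and only if they lie in the same coset of $H_{j+1}^\alpha$, and $\Phi$ preserves finiteness of Hausdorff distance --- shows that $\Phi$ maps the cosets of $H_j^\alpha$ into the cosets of $H_j^\beta$ for \emph{every} $j$, and forces the two towers to have the same length. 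The payoff is the penultimate term: $H_{k-1}^\alpha$ is proper, $\tau$-invariant, and normal in all of $N_1$, because its normalizer is $H_k^\alpha=N_1$ --- exactly the ``genuine proper, nontrivial, $\tau$-invariant normal subgroup'' you were looking for. Running your restriction/quotient argument with $H_{k-1}^\alpha$ in place of $H_\alpha$, the induction on dimension yields
$p_\alpha=p_{\tilde\alpha}\,p_{\alpha|_{\mathfrak{h}_{k-1}^\alpha}}=p_{\tilde\beta}\,p_{\beta|_{\mathfrak{h}_{k-1}^\beta}}=p_\beta$.
By contrast, your fallback idea of using the ideal $V_2\oplus\cdots\oplus V_d$ does not work: nothing in (Fact 3), Lemma \ref{distancecosets}, or bi-Lipschitzness implies that $\Phi$ respects the cosets of $\exp(V_2\oplus\cdots\oplus V_d)$ --- the only foliations you can transport are those built from $H_\alpha$, $U_\alpha$, and their iterated normalizers. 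Likewise, no trace bookkeeping is needed once the tower argument is in place, since the induction gives equality of characteristic polynomials outright rather than up to a scalar.
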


\begin{proof} Let $\mathfrak{h}_{\alpha}$ and $\mathfrak{h}_{\beta}$ be the Lie sub-algebras defined in (\ref{algebrah}), and let $H_{\alpha}\leq N_1$ and $H_{\beta}\leq N_2$ be the corresponding connected Lie subgroups.

By (Fact 3), the left cosets of $H_{\alpha}$ are mapped by $\Phi$ into the left cosets of $H_{\beta}$. Let us consider the increasing sequence of normalizers
$$H_{\alpha}=H_0^{\alpha}\triangleleft H_1^{\alpha}\triangleleft H_3^{\alpha}\triangleleft...$$
where $H_{i+1}^{\alpha}$ is the normalizer of $H_{i}^{\alpha}$, and 
$$\mathfrak{h}_{\alpha}=\mathfrak{h}_0^{\alpha}\triangleleft \mathfrak{h}_1^{\alpha}\triangleleft \mathfrak{h}_3^{\alpha}\triangleleft...$$
the corresponding Lie algebras. In the same way let $H_i^{\beta}$ and $\mathfrak{h}_i^{\beta}$ be the corresponding sequence for $\beta$. Notice that there exist $k_1$ and $k_2$ such that $H_{k_1}^{\alpha}=N_1$ and $H_{k_2}^{\beta}=N_2$. This is because the normalizer of a proper subgroup of a nilpotent Lie group is always strictly bigger than the subgroup \cite[Lemma 4.2]{LeDonneXie14}.

By Lemma \ref{distancecosets}, the left cosets of $H_j^{\alpha}$ are mapped into the left cosets of $H_j^{\beta}$ for all $j$. This follows because $\Phi$ is bi-Lipschitz. In particular, $k_1=k_2=k$.

We will argue by induction on the dimension $n$ of the groups. The case $n=1$ is trivial. Let us suppose that the lemma is true for dimension strictly smaller than $n$. Composing $\Phi$ with a translation if necessary, we can assume that $H_{k-1}^{\alpha}$ is mapped into $H_{k-1}^{\beta}$. The restriction of the quasi-metric $\varrho_\alpha$ to $H_{k-1}^\alpha$ is bi-Lipschitz equivalent to the quasi-metric induced by the derivation $\alpha|_{\mathfrak{h}_{k-1}^\alpha}$. The same holds for the restriction of $\varrho_\beta$ to $H_{k-1}^\beta$. Since $\dim(H_{k-1}^{\alpha})=\dim(H_{k-1}^{\beta})<n$, then $\alpha|_{\mathfrak{h}_{k-1}^{\alpha}}$ and $\beta|_{\mathfrak{h}_{k-1}^{\beta}}$ have the same characteristic polynomial; i.e.\ $p_{\alpha|_{\mathfrak{h}_{k-1}^{\alpha}}}=p_{\beta|_{\mathfrak{h}_{k-1}^{\beta}}}$.

As we did in Lemma \ref{distancecosets}, one can prove that the quasi-metric $\varrho_{\alpha}$ induces a quasi-metric $\tilde{\varrho}_{\alpha}$ on the quotient $N_1/H_{k-1}^{\alpha}$ bi-Lipschitz equivalent to the quasi-metric induced by $\tilde{\alpha}\in \mathrm{Der}(\mathfrak{n}_1/\mathfrak{h}_{k-1}^{\alpha})$. In the same way is defined $\tilde{\varrho}_{\beta}$.

The homeomorphism $\Phi$ induces a bi-Lipschitz homeomorphism 
$$\tilde{\Phi}:(N_1/H_{k-1}^{\alpha},\tilde{\varrho}_{\alpha})\to (N_2/H_{k-1}^{\beta},\tilde{\varrho}_{\beta}).$$
Then by induction hypothesis, $\tilde{\alpha}$ and $\tilde{\beta}$ have the same characteristic polynomial; i.e. $p_{\tilde{\alpha}}=p_{\tilde{\beta}}$. Putting it all together $p_{\alpha}=p_{\tilde{\alpha}}p_{\alpha|_{\mathfrak{h}_{k-1}^{\alpha}}}=p_{\tilde{\beta}}p_{\beta|_{\mathfrak{h}_{k-1}^{\beta}}}=p_{\beta}.$  
\end{proof}
 
\begin{remark}[Remark 4.1.]
In the proof of Lemma \ref{dimminlambda} of the next section, we will use again the fact that the restriction of a parabolic visual metric to an $\alpha$-invariant subgroup $H$ is bi-Lipschitz equivalent to the visual metrics on $H$ induced by the derivation $\alpha|_{\mathfrak{h}}$.
\end{remark}

\section{Minimal Hausdorff-dimensional curves}

The goal of this section is to give an alternative and more elementary proof of the second part of (Fact 3), i.e.\ that the left cosets of $U_\alpha$ are invariant under bi-Lipschitz maps. This follows from Lemma \ref{dimclass} below. We will use it in Section \ref{SectionHeisenberg} for the proof of Theorem \ref{heisenberg}. It also provides an elementary proof that $\lambda_1$ is invariant. Lemma \ref{dimdiagpart} may be useful in further investigations for defining higher-dimensional bi-Lipschitz invariants, like the minimal Hausdorff dimension of embedded surfaces in $N$.

If $(M,\varrho)$ is a quasi-metric space, the $t$-dimensional Hausdorff measure and the Hausdorff dimension of a subset $A\subset M$ are defined exactly in the same way as for metric spaces. When $\varrho$ is a distance, the Hausdorff dimension of a non-degenerate connected set is always bounded below by $1$. It is well known that when $\varrho$ is just a quasi-metric, there exists $\epsilon>0$ such that $\varrho^\epsilon$ is bi-Lipschitz equivalent a distance in $M$. Moreover, the Hausdorff dimensions of a subset $A$ for the quasi-metrics $\varrho$ and $\varrho^\epsilon$ are related by 
$$\mathrm{Hdim}_{\varrho^\epsilon}(A)=\frac{\mathrm{Hdim}_{\varrho}(A)}{\epsilon}.$$
This implies that $d_0(\varrho)=\inf\left\{\mathrm{Hdim}_\varrho(A):A\text{ is a non-degenerate connected set}\right\}$ is a positive number. We will compute the invariant $d_0$ for the parabolic visual quasi-metrics on $N$ in terms of the smallest eigenvalue of $\alpha$.

\begin{lemma}\label{dimminlambda}
Let $(N,\varrho)$ be the parabolic visual boundary of $X_\alpha$, and let $\lambda_1>0$ be the smallest eigenvalue of $\alpha$. Then $d_0(\varrho)=\lambda_1$.
\end{lemma}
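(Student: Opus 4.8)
The plan is to establish the two inequalities $d_0(\varrho)\le\lambda_1$ and $d_0(\varrho)\ge\lambda_1$ separately. For the upper bound I would exhibit an explicit connected set whose Hausdorff dimension is exactly $\lambda_1$, and for the lower bound I would compare $\varrho$, at small scales, with a left-invariant Riemannian distance $d_E$ on $N$, for which non-degenerate connected sets are known to have dimension at least $1$. The lower bound requires a small-scale counterpart of Lemma \ref{distanceorigin}, which I expect to be the main obstacle.

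For the upper bound, I would fix a genuine eigenvector $X_1$ of $\alpha$ for the smallest eigenvalue $\lambda_1$ (such a vector exists since $\lambda_1$ is an eigenvalue) and consider the one-parameter subgroup $\gamma(s)=\exp(sX_1)$. The key point is that (V2) and (V3) force $\varrho$ to be \emph{exactly} a power of $s$ along this curve. Indeed, since $\tau(t)\exp(sX_1)=\exp(e^{t\lambda_1}s\,X_1)$, writing $f(s)=\varrho(e,\exp(sX_1))$ and using (V3) gives the functional equation $f(e^{t\lambda_1}s)=e^{t}f(s)$, whose solution is $f(s)=c\,s^{1/\lambda_1}$ with $c=\varrho(e,\exp X_1)>0$. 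By left invariance (V2) this yields $\varrho(\gamma(s),\gamma(s'))=c\,|s-s'|^{1/\lambda_1}$, so $\gamma$ is a snowflake embedding of an interval with exponent $1/\lambda_1$. A standard covering computation then shows $\mathrm{Hdim}_\varrho(\gamma([0,1]))=\lambda_1$, and since this image is a non-degenerate connected set, $d_0(\varrho)\le\lambda_1$.

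For the lower bound, the crucial step is a small-scale dual of Lemma \ref{distanceorigin}: for every $\nu<\lambda_1$ there is $C>0$ with $\|X\|\le C\,\varrho(e,\exp X)^{\nu}$ whenever $\varrho(e,\exp X)\le 1$. I would prove it along the same lines as Lemma \ref{distanceorigin}, but now choosing $t\ge 0$ so that $\tau(t)$ normalizes $\exp X$ onto the (compact) unit sphere, giving $\|e^{t\alpha}X\|\le C_0$, and combining this with the lower bound $\|e^{t\alpha}X\|\ge \tilde c\,e^{t\nu}\|X\|$ valid for $t\ge 0$ and any $\nu<\lambda_1$. The latter follows by choosing an inner product making the generalized eigenspaces $V_i$ orthogonal and $e^{t\alpha}$-invariant, estimating each block by $\|e^{t\alpha}X_i\|\ge e^{t\lambda_i}\|X_i\|/q_i(t)$ for a polynomial $q_i$, and absorbing the polynomial factors into the strictly positive exponential gap $\lambda_i-\nu\ge\lambda_1-\nu$. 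Since $e^{-t}=\varrho(e,\exp X)$, this rearranges to the claimed estimate.

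Finally, translating into Hausdorff dimension: because $d_E(e,\exp X)\le C'\|X\|$ for small $X$ and both metrics are left invariant, the estimate gives $d_E(x,y)\le C''\varrho(x,y)^{\nu}$ at small scales, so the identity map $(N,\varrho)\to(N,d_E)$ is locally $\nu$-H\"older and hence $\mathrm{Hdim}_{d_E}(A)\le \nu^{-1}\mathrm{Hdim}_\varrho(A)$ for every $A$. As $d_E$ is a genuine distance, any non-degenerate connected set satisfies $\mathrm{Hdim}_{d_E}(A)\ge 1$, whence $\mathrm{Hdim}_\varrho(A)\ge\nu$; letting $\nu\uparrow\lambda_1$ yields $d_0(\varrho)\ge\lambda_1$ and therefore $d_0(\varrho)=\lambda_1$. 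The principal difficulty is precisely the lower-bound estimate when $\lambda_1$ carries nontrivial Jordan blocks: the polynomial factors $q_i(t)$ obstruct reaching the exponent $\lambda_1$ directly and force the argument through all $\nu<\lambda_1$ followed by a limit. One must also ensure the comparison behaves well for the mere quasi-metric $\varrho$; this is handled by the recalled fact that passing to a power $\varrho^{\epsilon}$ only rescales the dimension, so the H\"older-map inequalities remain valid.
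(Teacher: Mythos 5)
Your proposal is correct, but it takes a genuinely different route from the paper's. The paper proceeds in three steps: it first reduces to the diagonalizable case via Lemma \ref{dimdiagpart} (the quasi-metrics $\varrho_\alpha$ and $\varrho_\delta$ agree up to arbitrarily small snowflake distortion, hence induce the same Hausdorff dimensions); it then proves $d_0\geq\lambda_1$ by induction on the nilpotency class of $N$, using the explicit quasi-metric $\sum_i\|x_i-y_i\|^{1/\lambda_i}$ in the abelian base case and the $1$-Lipschitz projection $N\to N/Z$ onto the quotient by the center in the inductive step (sets collapsing to a point being handled inside the abelian group $Z$); and it obtains $d_0\leq\lambda_1$ from the Carnot subgroup $U_\alpha$, whose restricted visual metric is bi-Lipschitz to the snowflake $\varrho_{\mathrm{CC}}^{1/\lambda_1}$ of a geodesic Carnot--Carath\'eodory distance. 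You replace all of this by two direct estimates. For the upper bound, the functional equation $f(e^{t\lambda_1}s)=e^{t}f(s)$ forced by (V2)--(V3) along a one-parameter subgroup through a genuine $\lambda_1$-eigenvector pins $\varrho$ down to an exact snowflake of exponent $1/\lambda_1$ on that curve; this is more elementary than the paper's appeal to Carnot geometry (one caveat: your identity $\varrho(\gamma(s),\gamma(s'))=c\,|s-s'|^{1/\lambda_1}$ holds with a constant that may depend on the sign of $s'-s$, since the functional equation only determines $f$ separately on each half-line, but two-sided comparability with positive constants is all the dimension count needs). For the lower bound, your small-scale dual of Lemma \ref{distanceorigin}, giving $\|X\|\le C\varrho(e,\exp X)^{\nu}$ for every $\nu<\lambda_1$, makes the identity map into a left-invariant Riemannian distance locally $\nu$-H\"older, where non-degenerate connected sets have dimension at least $1$; letting $\nu\uparrow\lambda_1$ finishes, and the countable stability of Hausdorff dimension takes care of the fact that the H\"older bound is only local. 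Note that your ``take $\nu<\lambda_1$, then pass to the limit'' device plays exactly the role of the paper's ``take $\mu>1$, then $\mu\to1$'' comparison in Lemma \ref{dimdiagpart}: both absorb the polynomial factors coming from nontrivial Jordan blocks. What your packaging buys is a shorter, self-contained argument with no reduction to the diagonal part, no induction on nilpotency, and no input from Carnot--Carath\'eodory geometry or from the identification of restricted visual metrics (Remark 4.1). What the paper's route buys instead is Lemma \ref{dimdiagpart} as a standalone tool (explicitly flagged as potentially useful for higher-dimensional bi-Lipschitz invariants), the reuse of Lemma \ref{distanceorigin} exactly in the large-scale form needed for Lemma \ref{distancecosets}, and an induction scheme of the same type employed elsewhere in the paper.
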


In the proof of Lemma \ref{dimminlambda} we will use another lemma which reduces the computation to the case when $\alpha$ is diagonalizable. We decompose $\alpha=\delta+\nu$ where $\delta$ is diagonalizable and $\nu$ is nilpotent. By (\ref{ursula}), both $\delta$ and $\nu$ are derivations on $\mathfrak{n}$. Consider the Heintze group $X_\delta=N\rtimes_\delta \R$ defined by the diagonal part of $\alpha$. We use the same inner product on $\mathfrak{n}$ to define the left-invariant Riemannian metrics on $X_\alpha$ and $X_\delta$. Let $\varrho_\alpha$ and $\varrho_\delta$ be the parabolic visual quasi-metrics on $N$ induced by $\alpha$ and $\delta$ respectively.

\begin{lemma}\label{dimdiagpart}
For any subset $A$ of $N$, we have $\mathrm{Hdim}_{\varrho_\alpha}(A)=\mathrm{Hdim}_{\varrho_\delta}(A)$.
\end{lemma}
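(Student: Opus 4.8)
The plan is to prove that the identity map $(N,\varrho_\alpha)\to(N,\varrho_\delta)$ distorts distances only by a \emph{subpolynomial} (in fact polylogarithmic) factor, and then to invoke the elementary fact that a homeomorphism which is $\theta$-Hölder for every $\theta<1$ in both directions preserves the Hausdorff dimension of every subset. By left-invariance (V2) it suffices to compare $\varrho_\alpha(e,\exp X)$ with $\varrho_\delta(e,\exp X)$ for $X\in\mathfrak{n}$. Since $\exp$ is a diffeomorphism of the simply connected nilpotent group $N$, I would encode each visual quasi-metric by its flow time: writing $\Sigma_\bullet=\exp^{-1}\{y:\varrho_\bullet(e,y)=1\}$, a compact subset of $\mathfrak{n}\setminus\{0\}$, property (V3) gives a unique $t_\bullet(X)$ with $e^{t_\bullet\bullet}X\in\Sigma_\bullet$, and then $\varrho_\bullet(e,\exp X)=e^{-t_\bullet(X)}$ exactly. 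As $\Sigma_\bullet$ lies in an annulus bounded away from $0$ and $\infty$, the flow time is characterized up to an additive $O(1)$ by $\|e^{t_\alpha\alpha}X\|\asymp 1$ (resp.\ $\|e^{t_\delta\delta}X\|\asymp 1$).

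The key estimate is the effect of the nilpotent part. Because $\delta$ and $\nu$ commute, $e^{t\alpha}=e^{t\delta}e^{t\nu}$, and each generalized eigenspace $V_i$ is invariant under all three flows. Choosing the inner product so that the $V_i$ are mutually orthogonal — legitimate, since changing the inner product moves both quasi-metrics within their bi-Lipschitz classes and the desired equality is bi-Lipschitz invariant — the flows act block-diagonally, and $e^{t\nu}|_{V_i}$ is unipotent, hence polynomial in $t$. This yields a polynomial $P$ with
$$P(|t|)^{-1}\,\|e^{t\delta}X\|\le \|e^{t\alpha}X\|\le P(|t|)\,\|e^{t\delta}X\|,\qquad t\in\R,\ X\in\mathfrak{n}.$$
Evaluating at $t=t_\alpha(X)$, where $\|e^{t_\alpha\alpha}X\|\asymp 1$, gives $\bigl|\log\|e^{t_\alpha\delta}X\|\bigr|=O(\log|t_\alpha(X)|)$. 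On the other hand, $t\mapsto\log\|e^{t\delta}X\|$ is increasing with derivative a weighted average of the $\lambda_i$, hence bounded below by $\lambda_1>0$; since $\log\|e^{t_\delta\delta}X\|=O(1)$, the mean value theorem gives
$$\lambda_1\,|t_\alpha(X)-t_\delta(X)|\le \bigl|\log\|e^{t_\alpha\delta}X\|\bigr|+O(1)=O\bigl(\log|t_\alpha(X)|\bigr).$$

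Translating back through $t_\bullet=-\log\varrho_\bullet$, this reads $|\log\varrho_\alpha-\log\varrho_\delta|=O\bigl(\log\log(1/\varrho_\alpha)\bigr)$ as $\varrho_\alpha\to0$; equivalently $\varrho_\delta\asymp\varrho_\alpha\cdot(\log(1/\varrho_\alpha))^{O(1)}$. A polylogarithmic factor is subpolynomial, so for every $\theta\in(0,1)$ there is $C_\theta$ with $\varrho_\delta(e,\exp X)\le C_\theta\,\varrho_\alpha(e,\exp X)^{\theta}$ once $\varrho_\alpha(e,\exp X)$ is small, and symmetrically with the roles of $\alpha$ and $\delta$ exchanged. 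By left-invariance these bounds hold for all pairs of points with a base-point-independent constant, so the identity is $\theta$-Hölder in both directions for every $\theta<1$ (it suffices to control small distances, since Hausdorff dimension is computed from arbitrarily fine covers). As a $\theta$-Hölder map multiplies Hausdorff dimension by at most $\theta^{-1}$, we get $\mathrm{Hdim}_{\varrho_\delta}(A)\le\theta^{-1}\mathrm{Hdim}_{\varrho_\alpha}(A)$ for all $\theta<1$ and all $A$; letting $\theta\to1$ and using symmetry yields $\mathrm{Hdim}_{\varrho_\alpha}(A)=\mathrm{Hdim}_{\varrho_\delta}(A)$.

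The main obstacle is the middle estimate: showing the nilpotent part perturbs the flow time only \emph{logarithmically}. Two ingredients make it work — the polynomial two-sided control of the unipotent factor $e^{t\nu}$, and the strictly positive lower bound $\lambda_1$ on the exponential growth rate of $\|e^{t\delta}X\|$. The latter is precisely what converts an $O(\log|t|)$ discrepancy in norms into an $O(\log|t|)$, rather than merely $O(|t|)$, discrepancy in flow times, which is exactly the subpolynomial distortion that leaves Hausdorff dimension unchanged. One must also check that the flow-time characterization through the compact spheres $\Sigma_\bullet$ is exact and that the reduction to an adapted inner product is harmless; both follow from (V1)–(V3) together with the bi-Lipschitz invariance of Hausdorff dimension.
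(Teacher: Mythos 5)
Your proof is correct, and although it starts from the same Jordan decomposition $\alpha=\delta+\nu$ and the same basic input as the paper — the commuting factorization $e^{t\alpha}=e^{t\delta}e^{t\nu}$ with $\|e^{\pm t\nu}\|$ bounded by a polynomial in $|t|$ — it runs along a genuinely different route. The paper compares the one-parameter families of left-invariant norms on horospheres and absorbs the polynomial into slightly perturbed eigenvalues: for each $\mu>1$ it proves $\frac{1}{C}\|e^{-t\delta/\mu}v\|_0\le\|e^{-t\alpha}v\|_0\le C\|e^{-t\mu\delta}v\|_0$ for $t\le0$, deduces the power squeeze $\frac{1}{C}\varrho_\delta^{\mu}\le\varrho_\alpha\le C\varrho_\delta^{1/\mu}$ via $\varrho_{\mu\delta}=\varrho_\delta^{1/\mu}$, and concludes with the snowflake identity $\mathrm{Hdim}_{\varrho^\epsilon}=\mathrm{Hdim}_\varrho/\epsilon$ as $\mu\to1$. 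You instead work directly on the quasi-metrics through the exact flow-time formula $\varrho_\bullet(e,\exp X)=e^{-t_\bullet(X)}$ supplied by (V3), and your key mechanism — the mean value theorem for $t\mapsto\log\|e^{t\delta}X\|$, whose derivative is a weighted average of the $\lambda_i$ and hence at least $\lambda_1$ — converts the $O(\log|t_\alpha|)$ discrepancy of norms into an $O(\log|t_\alpha|)$ discrepancy of flow times. The resulting polylogarithmic comparison $\varrho_\delta\asymp\varrho_\alpha\bigl(\log(1/\varrho_\alpha)\bigr)^{O(1)}$ is strictly sharper at small scales than the paper's power squeeze (it implies it for every $\mu>1$, not conversely), and your H\"older-map bound on Hausdorff dimension plays exactly the role of the paper's snowflake identity, so the two closing steps are equivalent in content. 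What your route buys: it uses only the axiomatic properties (V1)--(V3) of parabolic visual metrics — the paper's passage from horosphere norms to the quasi-metrics implicitly leans on the explicit construction (\ref{quasimetric}) — and the distortion estimate for the identity map is a quantitatively stronger output of possible independent use. What the paper's route buys: brevity, since replacing $\lambda_i$ by $\mu\lambda_i$ absorbs the polynomial in one line, with no flow-time bookkeeping. The two reductions you flagged are indeed the only delicate points and both are fine: the adapted inner product is harmless because changing $g_N$ moves both visual metrics within their bi-Lipschitz classes, and restricting to small distances is harmless because Hausdorff measures are computed from covers of arbitrarily small mesh, the estimates being base-point-free by left-invariance; note also that you only ever use the easy direction of your ``characterized up to additive $O(1)$'' remark, namely that $e^{t_\alpha\alpha}X\in\Sigma_\alpha$ forces $\|e^{t_\alpha\alpha}X\|\asymp1$.
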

\begin{proof}
We will show that for any $\mu>1$, there exists a constant $C=C(\mu)\geq 1$ such that
\begin{equation}\label{compdiagmetric}\frac{1}{C}\varrho_\delta(x,y)^\mu\leq \varrho_\alpha(x,y)\leq C\varrho_\delta(x,y)^{1/\mu},\end{equation}
for all $x,y\in N$. The constant $C$ usually explodes when $\mu\to 1$.

We first compare the Riemannian metrics on the tangent bundle of $N\times\{t\}$ for $t\leq 0$ induced by $\alpha$ and $\delta$. Since these Riemannian metrics are left-invariant, we can think of them as one-parameter families of inner products on $T_eN$, given by
$$\left\|v\right\|^\delta_t=\left\|e^{-t\delta}v\right\|_0\text{ and }\left\|v\right\|^\alpha_t=\left\|e^{-t\alpha}v\right\|_0\ \ (v\in T_eN),$$
respectively. Fix $v\in T_eN$, and let $v=v_1+\cdots+v_d$ with $v_i\in V_i$, be the decomposition of $v$ as a sum of vectors belonging to the generalized eigenspaces of $\alpha$. Then
\begin{align*}
\left\|e^{-t\alpha}v\right\|_0&=\left\|e^{-t\nu}e^{-t\delta}v\right\|_0\leq p(t)\left\|e^{-t\delta}v\right\|_0,
\end{align*}
where
$$p(t)=K\sum_{j=0}^{m-1}\frac{|t|^j}{j!},$$
and $m$ is the order of nilpotency of $\nu$ and $K=1+\|\nu\|_0$. Let us also fix $\mu>1$. Since the generalized eigenspaces $V_i$ are orthogonal, we have
\begin{align*}
p(t)^2\left\|e^{-t\delta}v\right\|_0^2&=\sum_{i=1}^dp(t)^2e^{-2t\lambda_i}\left\|v_i\right\|_0^2\leq C\sum_{i=1}^de^{-2\mu t\lambda_i}\left\|v_i\right\|_0^2=C\left\|e^{-t\mu\delta}v\right\|_0^2,
\end{align*}
where the inequality holds for all $t\leq 0$ if $C$ is chosen big enough depending only on $\mu$, the data defining $p(t)$, and the eigenvalues $\lambda_i$ for $i=1,\ldots,d$. Therefore $C$ depends only on $ \mu$ and $\alpha$.

To show the reverse inequality, notice that
$$\left\|e^{-t\delta}v\right\|_0=\left\|e^{t\nu}e^{-t\alpha}v\right\|_0\leq p(t)\left\|e^{-t\alpha}v\right\|_0,$$
and therefore
\begin{align*}
\left\|e^{-t\alpha}v\right\|_0&\geq p(t)^{-2}\left\|e^{-t\delta}v\right\|_0^2=\sum_{i=1}^dp(t)^{-2}e^{-2t\lambda_i}\left\|v_i\right\|_0^2\\
& \geq \frac{1}{C}\sum_{i=1}^de^{-2\frac{1}{\mu}t\lambda_i}\left\|v_i\right\|_0^2=\frac{1}{C}\left\|e^{-t\frac{1}{\mu}\delta}v\right\|_0^2.
\end{align*}
Here, as before, the inequality holds for all $t\leq 0$ and the constant $C$ depends only on $\mu$ and the derivation $\alpha$.

In summary, we have shown that for any $\mu>1$, there is a constant $C$ such that for all $t\leq 0$ and $v\in T_eN$ we have
$$\frac{1}{C}\left\|e^{-t\frac{1}{\mu}\delta}v\right\|_0\leq \left\|e^{-t\alpha}v\right\|_0\leq C\left\|e^{-t\mu\delta}v\right\|_0.$$
Since $\varrho_{\mu\delta}=\varrho_{\delta}^{1/\mu}$ and $\varrho_{1/\mu\delta}=\varrho_{\delta}^\mu$, the conclusion (\ref{compdiagmetric}) follows from the last inequalities.
\end{proof}

\begin{proof}[Proof of Lemma \ref{dimminlambda}]
By Lemma \ref{dimdiagpart}, we may assume that $\alpha$ is diagonalizable. We will prove by induction on the nilpotency of $N$ that $d_0\geq \lambda_1$.

First, suppose $N$ is Abelian. Then the parabolic visual quasi-metric is bi-Lipschitz equivalent to the quasi-metric given by
$$\hat{\varrho}(x,y)=\sum_{i=1}^d\left\|x_i-y_i\right\|_0^{\frac{1}{\lambda_i}}.$$
Since $\|\cdot\|_0$ is an Euclidean norm, it follows that $d_0(\varrho)=\lambda_1$ in this case.

Suppose now that the assertion holds for every $k$-nilpotent group, and let $N$ be $(k+1)$-nilpotent with $k\geq 0$. Let $Z$ be the center of $N$ and let $\pi:N\to N/Z$ be the canonical projection. Let $A$ be a non-degenerate connected subset of $N$, then if $\pi(A)$ is a connected subset of $N/Z$. Since $N/Z$ is at most $k$-nilpotent, if $\pi(A)$ is non-degenerate we have by induction hypothesis that
$$\lambda_1\leq \mathrm{Hdim}_{\tilde{\varrho}}(\pi(A))\leq\mathrm{Hdim}_{\varrho}(A),$$
where $\tilde{\varrho}$ is the parabolic visual metric on $N/Z$, and the second inequality follows since $\pi$ is a $1$-Lipschitz map for this quasi-metric.

Suppose that $\pi(A)$ consist of a singleton, so $A$ is contained in a left coset $xZ$ for some $x\in N$. Recall that the center $\mathfrak{z}$ of $\mathfrak{n}$ is $\alpha$-invariant, so we can restrict the action of $\alpha$ to $\mathfrak{z}$. Moreover, the eigenvalues of the restriction of $\alpha$ to $\mathfrak{z}$ are bounded below by $\lambda_1$. Since $Z$ is Abelian, by the induction hypothesis we have that $\mathrm{Hdim}(A)\geq \lambda_1$. This concludes the proof of the claim.

We now show the reverse inequality. Let $\mathfrak{u}_{\alpha}=\mathrm{Lie}(V_1)$ and $U_{\alpha}\leq N$ its associated subgroup. By definition, $U_{\alpha}\rtimes_\alpha\R$ is a Carnot type Heintze group, and therefore, the restriction of the parabolic visual quasi-metric $\varrho$ to $U_{\alpha}$ is bi-Lipschitz equivalent to the snow-flake $\varrho_{\mathrm{CC}}^{1/\lambda_1}$, where $\varrho_{\mathrm{CC}}$ is the Carnot-Carath\'eodory distance on $U$. In particular, $\varrho_{\mathrm{CC}}$ is a geodesic distance, so there are many one-dimensional curves. From this it follows that $d_0(\varrho)\leq \lambda_1$.
\end{proof}

Let $x$ and $y$ be two points in $N$, then define $x\sim_1 y$ if they are equal or if there exists a connected set $A$ with Hausdorff dimension $\lambda_1$ containing both points. It is clear that any left coset of $U_{\alpha}$ is contained in an equivalence class of $\sim_1$.

\begin{lemma}\label{dimclass}
The equivalence classes of $\sim_1$ coincide with the left cosets of $U_{\alpha}$.
\end{lemma}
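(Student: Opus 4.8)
The plan is to establish the nontrivial inclusion: \emph{every $\sim_1$-equivalence class is contained in a single left coset of $U_\alpha$}. Together with the inclusion already noted before the statement, this yields the asserted equality. By left-invariance of $\varrho_\alpha$ it suffices to show that if $A$ is a non-degenerate connected set with $\mathrm{Hdim}_{\varrho_\alpha}(A)=\lambda_1$ and $e\in A$, then $A\subseteq U_\alpha$. First I would reduce to the case that $\alpha$ is diagonalizable: by Lemma~\ref{dimdiagpart} the Hausdorff dimension of any subset agrees for $\varrho_\alpha$ and $\varrho_\delta$, so the relation $\sim_1$ is unchanged when $\alpha$ is replaced by its diagonal part $\delta$; moreover the generalized $\lambda_1$-eigenspace $V_1$ of $\alpha$ coincides with the $\lambda_1$-eigenspace of $\delta$, so that $U_\alpha=U_\delta$. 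Hence I assume $\alpha$ diagonalizable with eigenspaces $V_1,\dots,V_d$ and eigenvalues $\lambda_1<\cdots<\lambda_d$, and argue by induction on the nilpotency step of $N$.

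For the base case $N$ abelian, I would use the explicit model $\hat\varrho(x,y)=\sum_i\|x_i-y_i\|_0^{1/\lambda_i}$ from the proof of Lemma~\ref{dimminlambda}. For each $i\geq 2$ the linear projection $p_i:N\to V_i$ is $1$-Lipschitz from $(N,\hat\varrho)$ to $(V_i,\|\cdot\|_0^{1/\lambda_i})$, so $\mathrm{Hdim}(p_i(A))\leq\lambda_1$. Since a non-degenerate connected subset of $(V_i,\|\cdot\|_0^{1/\lambda_i})$ has Hausdorff dimension at least $\lambda_i>\lambda_1$, each $p_i(A)$ must be a single point; as $e\in A$ this forces $A\subseteq V_1=U_\alpha$.

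For the inductive step, let $Z$ be the center of $N$ and $\pi:N\to N/Z$ the canonical projection, which is $1$-Lipschitz for the quotient quasi-metric $\tilde\varrho_\alpha$ associated to the induced derivation $\tilde\alpha$. Then $\pi(A)$ is connected with $\mathrm{Hdim}_{\tilde\varrho_\alpha}(\pi(A))\leq\lambda_1$. If $\pi(A)$ is non-degenerate, Lemma~\ref{dimminlambda} applied to $N/Z$ forces the smallest eigenvalue $\tilde\lambda_1$ of $\tilde\alpha$ to equal $\lambda_1$ and $\mathrm{Hdim}(\pi(A))=\lambda_1$; the induction hypothesis then places $\pi(A)$ in a single coset of $U_{\tilde\alpha}=\pi(U_\alpha)$ (the identity $\pi(U_\alpha)=U_{\tilde\alpha}$ follows since $\pi$ is a Lie homomorphism sending $V_1$ onto the $\lambda_1$-eigenspace of $\tilde\alpha$). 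This yields $A\subseteq U_\alpha Z$. If instead $\pi(A)$ is a point, then $A$ lies in a coset of $Z$; since $Z$ is abelian and $\alpha$-invariant, the restriction fact of Remark~4.1 together with the base case applied inside $Z$ gives $A\subseteq\exp(V_1\cap\mathfrak{z})\subseteq U_\alpha$. In either case $A\subseteq U_\alpha Z$.

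It remains to remove the central directions not lying in $U_\alpha$, which I expect to be the main obstacle, since such directions may belong to $[\mathfrak{n},\mathfrak{n}]$ and therefore cannot be detected by any abelianization quotient. The key observation is structural: writing $\mathfrak{z}=(\mathfrak{z}\cap\mathfrak{u}_\alpha)\oplus\mathfrak{c}$ with $\mathfrak{c}$ an $\alpha$-invariant complement, one checks that $\mathfrak{c}\cap V_1=0$, so all eigenvalues of $\alpha|_{\mathfrak{c}}$ exceed $\lambda_1$, and that $\mathfrak{c}$ is a central ideal of $\mathfrak{g}:=\mathfrak{u}_\alpha+\mathfrak{z}$ with $\mathfrak{g}=\mathfrak{u}_\alpha\oplus\mathfrak{c}$ as Lie algebras. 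Consequently $G:=U_\alpha Z$ splits as a direct product $G\cong U_\alpha\times C$ with $C=\exp(\mathfrak{c})$ an abelian group, and $G$ is $\alpha$-invariant, so $\varrho_\alpha|_G$ is bi-Lipschitz to the visual quasi-metric of $\alpha|_{\mathfrak{g}}$. The projection $p_C:G\to C$ is then the $1$-Lipschitz quotient by the $\alpha$-invariant normal factor $U_\alpha$, whose induced derivation is $\alpha|_{\mathfrak{c}}$. Therefore $p_C(A)$ is connected with $\mathrm{Hdim}\leq\lambda_1$, while the minimal dimension in $C$ is the smallest eigenvalue of $\alpha|_{\mathfrak{c}}$, which is strictly larger than $\lambda_1$; hence $p_C(A)$ is a single point, and since $e\in A$ we conclude $A\subseteq U_\alpha\times\{e\}=U_\alpha$, completing the induction.
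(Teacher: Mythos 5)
Your argument is correct, but it takes a genuinely different route from the paper's. The paper runs the induction along the increasing chain of normalizers $U_\alpha=N_0\triangleleft N_1\triangleleft\cdots\triangleleft N_k=N$: since $V_1\subseteq\mathfrak{u}_\alpha\subseteq\mathfrak{n}_{k-1}$, the derivation induced on $\mathfrak{n}/\mathfrak{n}_{k-1}$ has all eigenvalues strictly larger than $\lambda_1$, so Lemma \ref{dimminlambda} together with the $1$-Lipschitz quotient projection forces $\pi(A)$ to be a singleton; hence $A$ lies in a coset of $N_{k-1}$, and one restricts the metric (Remark 4.1) and repeats down the chain. This needs neither a diagonalizability reduction nor any analysis of the center, and it reuses the normalizer chain already driving the proof of Theorem \ref{maintheorem}. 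You instead induct on the nilpotency step, quotienting by the center $Z$ --- which only traps $A$ inside $U_\alpha Z$ --- and then remove the residual central directions via the splitting $U_\alpha Z\cong U_\alpha\times C$, where $\mathfrak{c}$ is an $\alpha$-invariant complement of $\mathfrak{z}\cap\mathfrak{u}_\alpha$ in $\mathfrak{z}$. That final step is exactly where your preliminary reduction to diagonalizable $\alpha$ (via Lemma \ref{dimdiagpart}) is genuinely needed: for non-semisimple $\alpha$ an invariant complement $\mathfrak{c}$ need not exist, and your abelian base case also relies on the explicit diagonal model $\hat\varrho$. The claims you flag as ``one checks'' do hold: $\mathfrak{c}\cap V_1=0$ for \emph{any} invariant complement because $\mathfrak{z}\cap V_1\subseteq\mathfrak{u}_\alpha$, and $U_{\tilde\alpha}=\pi(U_\alpha)$ because taking the generated subalgebra commutes with surjective homomorphisms (note that in your case (a) the equality $\tilde\lambda_1=\lambda_1$ guarantees $\pi(V_1)\neq 0$). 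The trade-off: the paper's proof is shorter and applies to general $\alpha$ directly, with diagonalizability hidden inside Lemma \ref{dimminlambda}; yours mirrors the center-quotient induction of Lemma \ref{dimminlambda} itself and makes explicit how the central directions outside $\mathfrak{u}_\alpha$ are excluded, at the cost of the extra reduction and structural verifications.
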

\begin{proof}
It remains to prove that an equivalence class of $\sim_1$ is contained in a left coset of $U_{\alpha}$. Consider the increasing chain of normalizers
$$U_{\alpha}=N_0\triangleleft N_1\triangleleft N_2\triangleleft\cdots,$$ 
where $N_{j+1}$ is the normalizer of $N_{j}$. Let $k$ be such that $N_{k}=N$.

Let $x,y\in N$ be two different points contained in a connected set $A$ of Hausdorff dimension $\lambda_1$. Denote by $\pi:N\to N/N_{k-1}$ the canonical projection. Then since $N_{k-1}$ contains $U$, the eigenvalues of the derivation $\tilde{\alpha}$, induced by $\alpha$ on $\mathfrak{n}/\mathfrak{n}_{k-1}$, are all strictly bigger than $\lambda_1$. If $\pi(A)$ is non-degenerate, then by Lemma \ref{dimminlambda}, we have
$$\lambda_1<\mathrm{Hdim}(\pi(A))\leq \mathrm{Hdim}(A)=\lambda_1,$$
which is a contradiction. Therefore $\pi(A)$ is a singleton, and $A$ is contained in a left coset of $N_{k-1}$. Arguing by induction we deduce that $A$ is contained in a left coset of $U_{\alpha}$.
\end{proof}

\section{Proof of Theorem \ref{heisenberg}\label{SectionHeisenberg}}

Let $K_n$ be the Heisenberg group of dimension $2n+1$, $\mathfrak{k}_n$ be its corresponding Lie algebra and $\mathfrak{z}$ be the center of $\mathfrak{k}_n$. Let $B=\{X_1,...X_n,Y_1,...,Y_n,Z\}$ be a canonical basis of $\mathfrak{k}_n$, so that $\mathrm{Span}(Z)=\mathfrak{z}$. Recall that the Lie brackets in $\mathfrak{k}_n$ are given by $[X_i,X_j]=[Y_i,Y_j]=0$ for all $i,j$, and 
$$[X_i,Y_j]= \left\{ \begin{array}{rl} Z & \text{if }  i=j; \\
                                    0 & \text{if }  i\neq j. \end{array} \right.$$

We will prove Theorem \ref{heisenberg} by induction on $d$, the number of eigenvalues of the derivations. To do this we need the base cases $d=2$ and $d=3$. The first case will be solved in a slightly more general context.

In the Lemmas \ref{twoeigenvalues}, \ref{derivations} and \ref{derivations2}, we will consider the class of nilpotent Lie algebras of the form $\mathfrak{k}_n\oplus \R^p$. This class consists of all $2$-nilpotent Lie algebras with derived algebra of dimension at most one. Denote this class by $\mathcal{C}$.

Let $\mathfrak{n}$ be a non-abelian algebra of $\mathcal{C}$. Observe that if $\mathfrak{k}$ is a sub-algebra of $\mathfrak{n}$ that does not contain the derived algebra $\mathfrak{n}'=[\mathfrak{n},\mathfrak{n}]$, then the normalizer of $\mathfrak{k}$ coincides with its centralizer. Moreover, if $\mathfrak{n}=V\oplus \mathfrak{n}'$ and $\mathfrak{k}\subset V$, then
\begin{equation}\label{intersection}\mathfrak{N}(\mathfrak{k})=\left(\mathfrak{N}(\mathfrak{k})\cap V\right)\oplus \mathfrak{n}'.\end{equation}
If $\mathfrak{n}_1$ and $\mathfrak{n}_2$ are Lie algebras in $\mathcal{C}$, and $\alpha$ and $\beta$ are derivations of $\mathfrak{n}_1$ and $\mathfrak{n}_2$ respectively, with the same positive eigenvalues $\lambda_1<\ldots<\lambda_d$, we write
\begin{equation}\label{decomposition}\mathfrak{n}_1=V_1\oplus \cdots\oplus V_d\text{ and }\mathfrak{n}_2=W_1\oplus \cdots\oplus W_d,\end{equation}
the direct-sum descomposition into the generalized eigenspaces of $\alpha$ and $\beta$ respectively.

\begin{lemma}\label{twoeigenvalues} Let $N_1$ and $N_2$ be two arbitrary groups with Lie algebras in $\mathcal{C}$. Suppose that there exists a bi-Lipschitz homeomorphism $\Phi:(N_1,\varrho_{\alpha})\to(N_2,\varrho_{\beta})$, where $\alpha$ and $\beta$ have two eigenvalues and $\dim V_2=\dim W_2=1$. Then $\alpha$ and $\beta$ have the same Jordan form. 
\end{lemma}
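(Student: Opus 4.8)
The plan is to use the bi-Lipschitz invariance of the normalizer-coset foliations to recover the full Jordan partition of the eigenvalue $\lambda_1$, running an induction on $\dim N_1=\dim N_2$. First I would invoke Lemma~\ref{premaintheorem}, so that $\alpha$ and $\beta$ have the same characteristic polynomial; they then share the two eigenvalues $\lambda_1<\lambda_2$ and satisfy $\dim V_1=\dim W_1$ and $\dim V_2=\dim W_2=1$. Since $V_2$ and $W_2$ are one-dimensional, the $\lambda_2$-part of each Jordan form is a single $1\times 1$ block, so the only thing left to prove is that the partition of $\dim V_1$ into $\lambda_1$-block sizes agrees for $\alpha$ and $\beta$. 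Observe that the characteristic polynomial cannot detect this partition, which is why the bi-Lipschitz hypothesis is essential. Using that $\Phi$ sends $\sim_1$-classes to $\sim_1$-classes (Lemma~\ref{dimclass}) and that Hausdorff dimension is a bi-Lipschitz invariant, the leaves of the two foliations have equal dimension, so $\dim U_\alpha=\dim U_\beta$; as $\dim U_\alpha=\dim N_1$ precisely when $N_1$ is non-abelian (because then $[V_1,V_1]=V_2$ and $\mathrm{LieSpan}(V_1)=\mathfrak{n}_1$), this shows $N_1$ is abelian if and only if $N_2$ is, and I would treat the two cases in parallel.

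The key invariant subgroup is $H_\alpha=\exp(\mathfrak{h}_\alpha)$ with $\mathfrak{h}_\alpha=\mathrm{LieSpan}(V_1^*)$, where inside $V_1$ one has $V_1^*=\mathrm{im}(\nu^{m_1-1})$ and $\nu=\alpha-\lambda_1\,\mathrm{Id}$ is the nilpotent part on $V_1$. By (Fact 3), $\Phi$ carries cosets of $H_\alpha$ into cosets of $H_\beta$, and since $\Phi$ is bi-Lipschitz these leaves have equal dimension, whence $\dim V_1^*=\dim W_1^*$; that is, $\alpha$ and $\beta$ have the same number of maximal $\lambda_1$-blocks. Exactly as in the proof of Lemma~\ref{premaintheorem}, combining (Fact 3) with Lemma~\ref{distancecosets} shows that $\Phi$ also preserves the cosets of every term of the normalizer chain $\mathfrak{h}_\alpha=\mathfrak{h}_0^\alpha\triangleleft\mathfrak{h}_1^\alpha\triangleleft\cdots\triangleleft\mathfrak{n}_1$ and matches it with the corresponding chain for $\beta$. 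The inductive step then reduces the problem to a proper $\tau$-invariant piece of smaller dimension lying in the class $\mathcal{C}$, on which the induced derivation has, for $\lambda_1$, the partition of $\alpha$ with every maximal block shortened by one.

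In the abelian case $V_1^*$ is an ideal, so I would quotient by $H_\alpha$: the induced $\tilde\alpha$ on $\mathfrak{n}_1/\mathfrak{h}_\alpha$ has $\lambda_1$-partition equal to that of $\alpha$ with each maximal block reduced by one (together with the $\lambda_2$-block), while $\alpha|_{\mathfrak{h}_\alpha}=\lambda_1\,\mathrm{Id}$ is trivial. The induced map $\tilde\Phi$ is bi-Lipschitz, so by induction $\tilde\alpha$ and $\tilde\beta$ have the same Jordan form; reading off the maximal $\lambda_1$-block size $m_1-1$ of $\tilde\alpha$ and then adding back the $\dim V_1^*=\dim W_1^*$ shortened cells reconstructs the partition of $\alpha$ unambiguously, and likewise for $\beta$. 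The non-abelian case is dual: here one checks $\lambda_2=2\lambda_1$, $V_2=\R Z=\mathfrak{n}_1'$, and that $\nu$ is infinitesimally symplectic, $\omega(\nu u,w)+\omega(u,\nu w)=0$, where $[\,\cdot,\cdot\,]=\omega(\cdot,\cdot)Z$. After discarding the Carnot case via (Fact 1)--(Fact 2) one has $m_1\ge 2$, so $\omega\bigl(\nu^{m_1-1}u,\nu^{m_1-1}w\bigr)=\pm\,\omega\bigl(u,\nu^{2(m_1-1)}w\bigr)=0$ forces $V_1^*$ to be $\omega$-isotropic; hence $\mathfrak{h}_\alpha=V_1^*$ and, by \eqref{intersection}, the chain is $V_1^*\triangleleft (V_1^*)^{\perp_\omega}\oplus\R Z\triangleleft\mathfrak{n}_1$. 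I would restrict $\Phi$ to the preserved coset of $H_1^\alpha=\exp\bigl((V_1^*)^{\perp_\omega}\oplus\R Z\bigr)$, which by Remark~4.1 is bi-Lipschitz for the visual metric of $\alpha|_{\mathfrak{h}_1^\alpha}$, apply the induction hypothesis, and reconstruct the partition as above.

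The main obstacle is the reconstruction in the non-abelian case, where $V_1^*$ fails to be an ideal and the symplectic form couples the restricted and quotient data. The crux is a linear-algebra identity: for a nilpotent $\nu$ that is infinitesimally symplectic with maximal block size $m_1\ge 2$, the subspace $(V_1^*)^{\perp_\omega}$ is $\nu$-invariant, contains $\mathrm{im}\,\nu$, the quotient $V_1/(V_1^*)^{\perp_\omega}$ carries the trivial map, and the partition of $\nu|_{(V_1^*)^{\perp_\omega}}$ is obtained from that of $\nu$ by shortening exactly the maximal blocks by one. Thus the restriction to $\mathfrak{h}_1^\alpha$ carries precisely the reduced $\lambda_1$-partition, so that the induced derivation is again a two-eigenvalue, $\dim V_2=1$ member of $\mathcal{C}$ of smaller dimension. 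Establishing this shortening identity (which one verifies directly on the partitions admissible for the symplectic structure, the radical of $\omega$ contributing unaffected size-one blocks) is where the explicit algebraic structure of the Heisenberg group enters, and it is what makes the reconstruction, and hence the equality of the Jordan forms of $\alpha$ and $\beta$, go through.
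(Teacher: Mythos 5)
Your overall architecture (invariant cosets of $H_\alpha$, normalizer chain, induction, reconstruction of the $\lambda_1$-partition) matches the spirit of the paper's proof, and your abelian case is essentially sound; but the non-abelian case rests on a linear-algebra identity that is false in the class $\mathcal{C}$. You claim that the partition of $\nu|_{(V_1^*)^{\perp_\omega}}$ is that of $\nu$ with every maximal block shortened by one, ``the radical of $\omega$ contributing unaffected size-one blocks''. The class $\mathcal{C}$ consists of the algebras $\mathfrak{k}_n\oplus\R^p$, and the radical of $\omega$ (the $\R^p$-directions inside $V_1$) can carry Jordan blocks of $\nu$ of \emph{arbitrary} size, not just size one. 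Concretely, take $\mathfrak{n}_1=\mathfrak{k}_1\oplus\R^2$ with basis $X,Y,Z,A_1,A_2$, $[X,Y]=Z$, and let $\alpha=\lambda_1\,\mathrm{Id}+\nu$ on $V_1=\mathrm{Span}(X,Y,A_1,A_2)$, $\alpha(Z)=2\lambda_1 Z$, where $\nu Y=X$, $\nu A_2=A_1$, $\nu X=\nu A_1=0$; the infinitesimal symplectic condition holds because $A_1,A_2$ lie in the radical, so this is a derivation. Its $\lambda_1$-partition is $[2,2]$, $V_1^*=\mathrm{Span}(X,A_1)$, and $(V_1^*)^{\perp_\omega}=\mathrm{Span}(X,A_1,A_2)$, on which $\nu$ has partition $[2,1]$: the radical maximal block is not shortened at all, only the symplectic one is. Worse, if all maximal blocks lie in the radical (say $\nu A_2=A_1$ and $\nu=0$ on $X,Y$), then $V_1^*\subset\mathrm{rad}\,\omega$, so $(V_1^*)^{\perp_\omega}=V_1$ and $\mathfrak{N}(\mathfrak{h}_\alpha)=\mathfrak{n}_1$: your restriction step does not lower the dimension and the induction stalls. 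You cannot exclude these configurations, since the lemma is invoked in the proof of Theorem \ref{heisenberg} precisely on quotients that acquire abelian factors.

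Both defects point to one missing invariant: maximal $\lambda_1$-blocks come in two kinds, those whose top vector lies in $\mathfrak{N}(\mathfrak{h}_\alpha)$ and those whose top vector does not, and they behave differently under restriction or quotient. The paper's proof handles this by (i) inducting on the maximal block size $m$ rather than on dimension, (ii) passing to the quotient $N(H_\alpha)/H_\alpha$, which reduces $m$ even when $\mathfrak{N}(\mathfrak{h}_\alpha)=\mathfrak{n}_1$, and (iii) proving that, besides $a=\dim\mathfrak{h}_\alpha$, the second quantity $q=\dim\left(\mathfrak{N}(\mathfrak{h}_\alpha)\cap V_1\right)-\dim\ker(\alpha-\lambda_1\,\mathrm{Id})^{m-1}$ is also matched: Lemma \ref{distancecosets} makes $\Phi$ preserve the normalizer cosets, topological invariance of dimension matches $\dim\mathfrak{N}(\mathfrak{h}_\alpha)$ with $\dim\mathfrak{N}(\mathfrak{h}_\beta)$, and (\ref{intersection}) converts this into $q_\alpha=q_\beta$. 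The Jordan form is then rebuilt by adding one column to $q$ blocks of size $m-1$ and two columns to $a-q$ blocks of size $m-2$ of the quotient derivation. Your argument can be repaired by replacing the restriction to $\mathfrak{N}(\mathfrak{h}_\alpha)$ with this quotient and carrying $q$ through the induction; as written, the non-abelian reconstruction is unjustified and the induction need not terminate. (On the positive side, your symplectic computation showing $V_1^*$ is isotropic when $m_1\geq 2$, hence $\mathfrak{h}_\alpha=V_1^*$ is abelian, is a clean substitute for the paper's Lemmas \ref{derivations} and \ref{derivations2}.)
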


We will first prove some useful properties of the derivations of the algebras corresponding to the class $\mathcal{C}$. Let $\mathfrak{n}$ in $\mathcal{C}$ and $\alpha$ as in (\ref{decomposition}). We fix in the sequel a Jordan basis
$$\{X_{ir}^k:1\leq i \leq d, 1\leq r\leq n_i, 1\leq k \leq m_{ir}\}$$ 
of $\mathfrak{n}$ associated to $\alpha$ where $n_i$ is the number of Jordan blocks corresponding to the eigenspace $V_i$. $X_{ir}^1$ are the eigenvectors and $\alpha(X_{ir}^k)=\lambda_i X_{ir}^k+X_{ir}^{k-1}$ when $k>1$.

Notice that the derived algebra $\mathfrak{n}'$ is $\alpha$-invariant, and if it has dimension one, it is spanned by an eigenvector of $\alpha$ of eigenvalue $\lambda_p$ for some $p$.

\begin{lemma}\label{derivations} If $\left[X_{ir}^1,X_{js}^l\right]\neq 0$ then $l=m_{js}$. In particular, if two eigenvectors satisfy $\left[X_{ir}^1,X_{js}^1\right]\neq 0$, then $m_{ir}=m_{js}=1$.
\end{lemma}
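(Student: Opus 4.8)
The plan is to exploit that $\mathfrak{n}$ is $2$-step nilpotent with $\dim\mathfrak{n}'\le 1$, so every bracket lies in the one-dimensional derived algebra. First I would observe that the hypothesis $[X_{ir}^1,X_{js}^l]\neq 0$ forces $\mathfrak{n}'$ to be exactly one-dimensional. As noted just above the statement, $\mathfrak{n}'$ is then $\alpha$-invariant and spanned by an eigenvector $Z$ with $\alpha(Z)=\lambda_p Z$ for some index $p$. Since all brackets take values in $\R Z$, the Lie bracket can be encoded by a single alternating bilinear form $\omega$ on $\mathfrak{n}$, defined by $[A,B]=\omega(A,B)\,Z$.

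The key step is to translate the Leibniz rule $\alpha([A,B])=[\alpha A,B]+[A,\alpha B]$ into an equation for $\omega$. Comparing coefficients of $Z$ yields
$$\lambda_p\,\omega(A,B)=\omega(\alpha A,B)+\omega(A,\alpha B)$$
for all $A,B\in\mathfrak{n}$. Now I would specialize to $A=X_{ir}^1$ and $B=X_{js}^l$, using $\alpha X_{ir}^1=\lambda_i X_{ir}^1$ and $\alpha X_{js}^l=\lambda_j X_{js}^l+X_{js}^{l-1}$ (with the convention $X_{js}^0=0$). Writing $a_l:=\omega(X_{ir}^1,X_{js}^l)$, the identity collapses to the triangular recursion
$$(\lambda_p-\lambda_i-\lambda_j)\,a_l=a_{l-1},\qquad l=1,\dots,m_{js},$$
together with the boundary value $a_0=0$.

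Solving this recursion finishes the argument. If $\lambda_p\neq\lambda_i+\lambda_j$, then the relation at $l=1$ forces $a_1=0$, and inductively $a_l=0$ for every $l$, contradicting $[X_{ir}^1,X_{js}^l]\neq 0$; hence $\lambda_p=\lambda_i+\lambda_j$. With that coefficient vanishing, the recursion reads $a_{l-1}=0$ for $l=1,\dots,m_{js}$, i.e.\ $a_0=a_1=\cdots=a_{m_{js}-1}=0$, so the only index at which $\omega(X_{ir}^1,X_{js}^{\bullet})$ can be nonzero is $l=m_{js}$. This is precisely the first assertion. For the ``in particular'' part, if two eigenvectors satisfy $[X_{ir}^1,X_{js}^1]\neq 0$, applying the first assertion with the second factor in position $l=1$ gives $m_{js}=1$; and by antisymmetry $[X_{js}^1,X_{ir}^1]=-[X_{ir}^1,X_{js}^1]\neq 0$ gives, symmetrically, $m_{ir}=1$.

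The main obstacle I anticipate is purely bookkeeping: correctly fixing the boundary condition $a_0=0$ and the admissible range of $l$, so that the recursion genuinely forces all lower-index coefficients to vanish. The conceptual content—that the derivation property turns the bracket into an eigen-equation for $\omega$ with eigenvalue $\lambda_p$, whence the ``shift'' $X_{js}^l\mapsto X_{js}^{l-1}$ propagates nonvanishing only to the top of the Jordan chain—is the crux, but it is short once $\omega$ is introduced.
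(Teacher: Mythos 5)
Your proposal is correct and is essentially the paper's proof: your recursion $(\lambda_p-\lambda_i-\lambda_j)\,a_l=a_{l-1}$, obtained by applying the Leibniz rule to $[X_{ir}^1,X_{js}^l]$ and using that $\alpha$ acts on the one-dimensional derived algebra as multiplication by $\lambda_p$, is exactly the paper's displayed computation (there with $\lambda_p=\lambda_i+\lambda_j$ already imposed via (\ref{ursula})), and solving it to force $a_{l'}=0$ for all $l'<m_{js}$ is the same conclusion, including the antisymmetry argument for the ``in particular'' part. The only cosmetic differences are your packaging of the bracket into the bilinear form $\omega$ and your re-derivation of $\lambda_p=\lambda_i+\lambda_j$ from the recursion itself rather than by citing (\ref{ursula}).
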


\begin{proof}
If $m_{js}=1$ there is nothing to prove, so we may assume that $m_{js}>1$. Since $\left[X_{ir}^1,X_{js}^l\right]\neq 0$, by (\ref{ursula}) $\lambda_i+\lambda_j=\lambda_p$. For $l>1$, we have
\begin{align*}
\lambda_p \left[X_{ir}^1,X_{js}^l\right] &= \alpha \left[X_{ir}^1,X_{js}^l\right] = \left[\alpha\,X_{ir}^1,X_{js}^l\right]+\left[X_{ir}^1,\alpha\,X_{js}^l\right]\\
&=\lambda_i\left[X_{ir}^1,X_{js}^l\right]+\lambda_j\left[X_{ir}^1,X_{js}^l\right]+\left[X_{ir}^1,X_{js}^{l-1}\right]\\
&=\lambda_p \left[X_{ir}^1,X_{js}^l\right]+\left[X_{ir}^1,X_{js}^{l-1}\right].
\end{align*}
Then $\left[X_{ir}^1,X_{js}^l\right]=0$ for all $l<m_{js}$.
\end{proof}

\begin{lemma}\label{derivations2}
If $\left[X_{ir}^1,X_{js}^{m_{js}}\right]\neq 0$ and $m_{ir}\geq m_{js}$, then $m_{js}=m_{ir}$.
\end{lemma}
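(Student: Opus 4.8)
The plan is to reduce the statement to a finite-difference recurrence for the structure constants of the two Jordan blocks, exploiting that the derived algebra $\mathfrak{n}'$ is at most one-dimensional. Since $\left[X_{ir}^1,X_{js}^{m_{js}}\right]\neq 0$, the derived algebra is exactly one-dimensional; by the observation preceding Lemma \ref{derivations} it is spanned by a genuine eigenvector $Z$ with $\alpha Z=\lambda_p Z$, and by (\ref{ursula}) the nonvanishing of the bracket forces $\lambda_p=\lambda_i+\lambda_j$. First I would set $\left[X_{ir}^k,X_{js}^l\right]=c_{kl}\,Z$ for scalars $c_{kl}$, using the conventions $X_{ir}^0=X_{js}^0=0$ (so that $c_{0l}=c_{k0}=0$) and $c_{kl}=0$ whenever an index leaves its admissible range.

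Next I would apply $\alpha$ to $\left[X_{ir}^k,X_{js}^l\right]$ and use that $\alpha$ is a derivation together with $\alpha X_{ir}^k=\lambda_i X_{ir}^k+X_{ir}^{k-1}$ and $\alpha X_{js}^l=\lambda_j X_{js}^l+X_{js}^{l-1}$. Comparing coefficients of $Z$ gives $\lambda_p c_{kl}=(\lambda_i+\lambda_j)c_{kl}+c_{k-1,l}+c_{k,l-1}$, and cancelling the term $(\lambda_i+\lambda_j)c_{kl}=\lambda_p c_{kl}$ yields the recurrence
$$c_{k-1,l}+c_{k,l-1}=0,\qquad 1\le k\le m_{ir},\ 1\le l\le m_{js}.$$
This says that along each anti-diagonal $k+l=\text{const}$ the coefficients agree up to sign.

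Then I would propagate the nonzero value $c_{1,m_{js}}\neq 0$ down the anti-diagonal. Applying the recurrence at $(k,l)=(2+t,m_{js}-t)$ and inducting on $t$ gives $c_{1+t,\,m_{js}-t}=(-1)^t c_{1,m_{js}}$ for every $t$ with $1+t\le m_{ir}$ and $m_{js}-t\ge 1$. Assume for contradiction that $m_{ir}>m_{js}$. Then $1+m_{js}\le m_{ir}$, so $X_{ir}^{1+m_{js}}$ is an actual basis vector, and taking $t=m_{js}$ produces $c_{1+m_{js},0}=(-1)^{m_{js}}c_{1,m_{js}}$. But the left-hand side is the coefficient of $\left[X_{ir}^{1+m_{js}},X_{js}^0\right]=\left[X_{ir}^{1+m_{js}},0\right]=0$, forcing $c_{1,m_{js}}=0$, a contradiction. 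Hence $m_{ir}\le m_{js}$, and combined with the hypothesis $m_{ir}\ge m_{js}$ this gives $m_{ir}=m_{js}$.

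The only delicate point is the bookkeeping of the index ranges: every application of the recurrence needs both indices inside $\{1,\ldots,m_{ir}\}\times\{1,\ldots,m_{js}\}$, and the contradiction arises precisely because the strict inequality $m_{ir}>m_{js}$ is what lets the anti-diagonal reach the axis $l=0$ while $X_{ir}^{1+m_{js}}$ still exists. I expect tracking these bounds to be the main thing to get right; the underlying computation with the derivation identity is routine.
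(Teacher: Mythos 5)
Your proposal is correct and follows essentially the paper's own argument: both apply the derivation identity to $\left[X_{ir}^k,X_{js}^l\right]$, cancel $\lambda_p=\lambda_i+\lambda_j$, and propagate the resulting anti-diagonal relation $\left[X_{ir}^{k-1},X_{js}^l\right]=-\left[X_{ir}^k,X_{js}^{l-1}\right]$ starting from the nonzero bracket $\left[X_{ir}^1,X_{js}^{m_{js}}\right]$. The only (minor) difference is the endgame: the paper stops at $\left[X_{ir}^{m_{js}},X_{js}^1\right]\neq 0$ and concludes $m_{ir}=m_{js}$ by invoking Lemma \ref{derivations} through antisymmetry, whereas you assume $m_{ir}>m_{js}$ and push one step further to the axis $c_{k,0}=0$ to get a direct contradiction, a self-contained variant that also absorbs the paper's separate $m_{js}=1$ base case into your index conventions.
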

\begin{proof}
If $m_{js}=1$, the conclusion follows from Lemma \ref{derivations}. So we may assume that $m_{js}>1$. Let $k,l>1$ and suppose that $\left[X_{ir}^1,X_{js}^{m_{js}}\right]\neq 0$. As before, let $\lambda_i+\lambda_j=\lambda_p$. Then
\begin{align*}
\lambda_p \left[X_{ir}^k,X_{js}^l\right]&= \alpha \left[X_{ir}^k,X_{js}^l\right] = \left[\alpha\,X_{ir}^k,X_{js}^l\right]+\left[X_{ir}^k,\alpha\,X_{js}^l\right]\\
&=\lambda_i\left[X_{ir}^k,X_{js}^l\right]+\lambda_j\left[X_{ir}^k,X_{js}^l\right]+\left[X_{ir}^k,X_{js}^{l-1}\right]+\left[X_{ir}^{k-1},X_{js}^l\right]\\
&=\lambda_p \left[X_{ir}^k,X_{js}^l\right]+\left[X_{ir}^k,X_{js}^{l-1}\right]+\left[X_{ir}^{k-1},X_{js}^l\right].
\end{align*}
This implies that $\left[X_{ir}^{k-1},X_{js}^l\right]=-\left[X_{ir}^k,X_{js}^{l-1}\right]$. If $m_{ir}\geq m_{js}$, we have 
$$0\neq\left[X_{ir}^1,X_{js}^{m_{js}}\right]=-\left[X_{ir}^2,X_{js}^{m_{js}-1}\right]=\cdots=(-1)^{m_{js}-1}\left[X_{ir}^{m_{js}},X_{js}^1\right].$$
By Lemma \ref{derivations}, this implies that $m_{ir}=m_{js}$. 
\end{proof} 

\begin{proof}[Proof of Lemma \ref{twoeigenvalues}]
By \cite[Theorem 1.4]{Carrasco14}, the maximal size $m_1$ of the $\lambda_1$-blocks is a quasi-isometry invariant; i.e.\ $m_1^\alpha=m_1^\beta=m$. We will argue by induction on $m$.

If $m=1$, then the derivations $\alpha$ and $\beta$ are diagonalizable. This case follows clearly since $\alpha$ and $\beta$ both have $\mathrm{diag}(\lambda_1,\ldots,\lambda_1,\lambda_2)$ as diagonal form.

Suppose that $m>1$, and that the lemma is true whenever the maximal size of the $\lambda_1$-blocks is smaller than $m$. As we said above, the derived algebras $\mathfrak{n}'_1$ and $\mathfrak{n}'_2$ (if are not trivial) are contained in eigenspaces, which in this case must be $V_2$ and $W_2$.

Notice that Lemmas \ref{derivations} and \ref{derivations2} imply
$$\ker(\alpha-Id)^{m-1} \subset \mathfrak{N}(\mathfrak{h}_{\alpha})\cap V_1 \subset V_1.$$
Then we can choose $\{X_{ir}^k\}_{i,r,k}$ a Jordan basis of $\mathfrak{n}_1$ for $\alpha$, where $X_{11}^m,X_{12}^m,\cdots,X_{1q_{\beta}}^m$ belong to $\mathfrak{N}(\mathfrak{h}_{\alpha})$ and 
$$\mathrm{Span}\{X_{11}^m,X_{12}^m,\cdots,X_{1q_{\alpha}}^m\}\oplus \ker\left((\alpha-Id)^{m-1}\right) = \mathfrak{N}(\mathfrak{h}_{\alpha})\cap V_1.$$
In the same way we can choose a Jordan basis $\{Y_{ir}^k\}_{i,r,k}$ of $\mathfrak{n}_2$ for $\beta$. We have that
\begin{align*}
q_\alpha&=\dim\left(\mathfrak{N}(\mathfrak{h}_\alpha)\cap V_1\right)-\dim\mathrm{ker}(\alpha-Id)^{m-1};\\
q_\beta&=\dim\left(\mathfrak{N}(\mathfrak{h}_\beta)\cap W_1\right)-\dim\mathrm{ker}(\beta-Id)^{m-1}.
\end{align*}
By Lemmas \ref{derivations} and \ref{derivations2}, the Lie algebras $\mathfrak{h}_\alpha$ and $\mathfrak{h}_\beta$ are abelian and their dimensions are given by
\begin{align*}
\dim\mathfrak{h}_\alpha&=\text{number of maximal size $\lambda_1$-subblocks of }\alpha;\\
\dim\mathfrak{h}_\beta&=\text{number of maximal size $\lambda_1$-subblocks of }\beta.
\end{align*}
After composing with a left translation, we may suppose that $\Phi$ maps $H_{\alpha}$ into $H_{\beta}$, so $\dim(\mathfrak{h}_{\alpha})=\dim(\mathfrak{h}_{\beta})=a$ and
$$\dim \ker (\alpha-Id)^{m-1}=\dim \ker (\beta-Id)^{m-1} =\dim V_1-a.$$
By Lemma \ref{distancecosets}, $\Phi$ maps $N(H_{\alpha})$ into $N(H_{\beta})$ and since 
$$\mathfrak{N}(\mathfrak{h}_{\alpha})=\left(\mathfrak{N}(\mathfrak{h}_{\alpha})\cap V_1\right)\oplus V_2\text{ and }\mathfrak{N}(\mathfrak{h}_{\beta})=\left(\mathfrak{N}(\mathfrak{h}_{\beta})\cap W_1\right)\oplus W_2,$$
we have $\dim\left(\mathfrak{N}(\mathfrak{h}_{\alpha})\cap V_1\right)=\dim\left(\mathfrak{N}(\mathfrak{h}_{\beta})\cap W_1\right)$ and therefore $q_\alpha=q_\beta=q$.

The map $\Phi$ induces a bi-Lipschitz map between $N(H_{\alpha})/H_{\alpha}$ and $N(H_{\beta})/H_{\beta}$, where the quotients are equipped with the quasi-metrics $\tilde{\varrho}_{\alpha}$ and $\tilde{\varrho}_{\beta}$, induced by $\varrho_{\alpha}$ and $\varrho_{\beta}$ respectively. Observe that the quotient Lie algebras $\mathfrak{N}(\mathfrak{h}_{\alpha})/\mathfrak{h}_{\alpha}$ and $\mathfrak{N}(\mathfrak{h}_{\beta})/\mathfrak{h}_{\beta}$ are isomorphic to the sub-algebras
\begin{align*}
\mathfrak{m}_1&=\mathrm{Span}\{X_{ir}^k: (i,r,k)\neq (1,s,m)\text{ with } s>q \text{ and } (i,r,k)\neq (1,s,1)\text{ with } m_{1s}=m\};\\
\mathfrak{m}_2&=\mathrm{Span}\{Y_{ir}^k: (i,r,k)\neq (1,s,m)\text{ with } s>q \text{ and } (i,r,k)\neq (1,s,1)\text{ with } m_{1s}=m\}
\end{align*}
respectively. Since $\tilde{\alpha}=\alpha|_{\mathfrak{m}_1}$ and $\tilde{\beta}=\beta|_{\mathfrak{m}_2}$ correspond with the induced derivations on the quotients by $\alpha$ and $\beta$ respectively we have that the quasi-metrics $\tilde{\varrho}_{\alpha}$ and $\tilde{\varrho}_{\beta}$ are bi-Lipschitz equivalent with $\varrho_{\tilde{\alpha}}$ and $\varrho_{\tilde{\beta}}$. Notice that the Jordan blocks of these derivations have size at most $m-1$.

Now we have a bi-Lipschitz homeomorphism $\tilde{\Phi}:(N(H_{\alpha})/H_{\alpha},\varrho_{\tilde{\alpha}}) \to (N(H_{\beta})/H_{\beta},\varrho_{\tilde{\beta}})$ where the groups $N(H_{\alpha})/H_{\alpha}$ and $N(H_{\beta})/H_{\beta}$ have their Lie algebras in $\mathcal{C}$. We use the induction hypothesis to claim that $\tilde{\alpha}$ and $\tilde{\beta}$ have the same Jordan form. 

We can recover the Jordan form of $\alpha$ from that of $\tilde{\alpha}$. This is done by choosing $q$ $\lambda_1$-subblocks from $\tilde{\alpha}$ of size $m-1$ and adding to them one column, and choosing $a-q$ $\lambda_1$-subblocks of size $m-2$ and adding two columns. The same holds for $\beta$. This shows that $\alpha$ and $\beta$ have the same Jordan form and completes the proof.
\end{proof}

\begin{lemma}\label{threeeigenvalues} 
Suppose that $d=3$. Let $\Phi:(K_n,\varrho_{\alpha})\to (K_n,\varrho_{\alpha})$ be a bi-Lipschitz homeomorphism. Then $\alpha$ and $\beta$ have the same Jordan form.
\end{lemma}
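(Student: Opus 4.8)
The plan is to first pin down the algebraic structure of derivations of $\mathfrak{k}_n$ with three eigenvalues, then to determine the Jordan form by reducing to the abelian case (already understood by Xie) on the smallest eigenspace, using the symplectic pairing of the Heisenberg bracket to transfer that information to the other eigenspaces.

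First I would invoke Lemma~\ref{premaintheorem} to conclude that $\alpha$ and $\beta$ have the same characteristic polynomial, so they share the eigenvalues $\lambda_1<\lambda_2<\lambda_3$ and satisfy $\dim V_i=\dim W_i$ for $i=1,2,3$. The key preliminary observation is that the derived algebra $\mathfrak{k}_n'=\mathfrak{z}=\mathrm{Span}(Z)$ is one-dimensional and central, so every nonzero bracket is a multiple of $Z$, and $Z$ is an eigenvector of $\alpha$, say of eigenvalue $\lambda_p$. Combining (\ref{ursula}) with the positivity of the eigenvalues and the nondegeneracy of the symplectic form induced by the bracket on $\mathfrak{k}_n/\mathfrak{z}$, I expect to force $p=3$ and $\lambda_3=\lambda_1+\lambda_2$: indeed $p=1$ makes all brackets vanish, while $p=2$, or any relation other than $\lambda_1+\lambda_2=\lambda_3$, leaves some eigenspace unable to pair, contradicting nondegeneracy and the fact that the only central vectors are multiples of $Z$. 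This yields $V_3=\mathrm{Span}(Z)$, $[V_1,V_1]=[V_2,V_2]=0$, and a perfect pairing $[V_1,V_2]=\mathrm{Span}(Z)$, whence $\dim V_1=\dim V_2=n$; the same structure holds for $\beta$.

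Next I would use this pairing to show that the Jordan partition of $\alpha|_{V_2}$ equals that of $\alpha|_{V_1}$. Writing $\alpha=\lambda_i\,\mathrm{Id}+N_i$ on $V_i$ with $N_i$ nilpotent, and letting $B(u,v)$ be the $Z$-coefficient of $[u,v]$ for $u\in V_1$, $v\in V_2$, applying $\alpha$ to $[u,v]$ and using $\lambda_3=\lambda_1+\lambda_2$ gives $B(N_1u,v)+B(u,N_2v)=0$; since $B$ is a nondegenerate pairing $V_1\times V_2\to\R$, the operator $N_2$ is minus the adjoint of $N_1$ and hence has the same Jordan partition. Concretely this is precisely the content of Lemmas~\ref{derivations} and \ref{derivations2}: Jordan chains of $V_1$ and $V_2$ that pair nontrivially must have equal size, and nondegeneracy produces a size-preserving bijection between them. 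Consequently the whole Jordan form of $\alpha$ is encoded by the single partition $\pi_\alpha$ of $\alpha|_{V_1}$, namely $\pi_\alpha$ at $\lambda_1$, $\pi_\alpha$ at $\lambda_2$, and one block of size $1$ at $\lambda_3$; likewise $\beta$ is encoded by the partition $\pi_\beta$ of $\beta|_{W_1}$. It therefore suffices to prove $\pi_\alpha=\pi_\beta$.

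Finally I would reduce this to the abelian case. Since $[V_1,V_1]=0$, the subalgebra $\mathfrak{u}_\alpha=\mathrm{LieSpan}(V_1)$ equals $V_1$ and is abelian, so the subgroup $U_\alpha$ is abelian. By Lemma~\ref{dimclass} the left cosets of $U_\alpha$ are exactly the classes of $\sim_1$, and since $\Phi$ is bi-Lipschitz it preserves Hausdorff dimension and hence the relation $\sim_1$; thus $\Phi$ carries cosets of $U_\alpha$ onto cosets of $U_\beta$. After a left translation I may assume $\Phi(U_\alpha)=U_\beta$, and restricting gives a bi-Lipschitz homeomorphism between $(U_\alpha,\varrho_\alpha|_{U_\alpha})$ and $(U_\beta,\varrho_\beta|_{U_\beta})$. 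By Remark 4.1 these restricted quasi-metrics are bi-Lipschitz equivalent to the abelian parabolic visual metrics defined by $\alpha|_{V_1}$ and $\beta|_{W_1}$, which have the single eigenvalue $\lambda_1$. Xie's theorem in the abelian case \cite{NageswariXie12,XiangdongXie12,XiangdongXie14} then forces $\alpha|_{V_1}$ and $\beta|_{W_1}$ to have the same Jordan form, i.e.\ $\pi_\alpha=\pi_\beta$, and together with the previous paragraph this shows that $\alpha$ and $\beta$ have the same Jordan form. I expect the main obstacle to lie in the first two steps: ruling out every eigenvalue configuration except $\lambda_3=\lambda_1+\lambda_2$, and, above all, making the symplectic duality precise in the presence of nontrivial Jordan blocks so that the partitions on $V_1$ and $V_2$ match exactly, rather than merely sharing the same maximal block size. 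Once the algebra is controlled, the geometric input (preservation of $U_\alpha$-cosets and the appeal to the abelian invariance of the Jordan form) is comparatively routine.
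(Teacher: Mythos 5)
Your proof is correct, and on the geometric half it coincides with the paper's: both arguments use Lemma \ref{dimclass} (plus a left translation and Remark 4.1) to restrict $\Phi$ to a bi-Lipschitz map $(U_\alpha,\varrho_{\alpha|_{V_1}})\to(U_\beta,\varrho_{\beta|_{W_1}})$ between abelian groups, and then invoke Xie's theorem to conclude that $\alpha|_{V_1}$ and $\beta|_{W_1}$ have the same Jordan form. Where you genuinely diverge is in the treatment of the $\lambda_2$-part. The paper runs a second geometric argument: it computes $\mathfrak{N}(V_1)=V_1\oplus V_3$, observes that $K_n/N(U_\alpha)\cong V_2$ is abelian, and applies Xie's theorem a second time to the bi-Lipschitz map induced between the quotients to get $\alpha|_{V_2}\sim\beta|_{W_2}$. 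You instead show algebraically that $\alpha|_{V_2}$ is already determined by $\alpha|_{V_1}$: from $\lambda_3=\lambda_1+\lambda_2$ and the derivation identity, the nilpotent parts satisfy $B(N_1u,v)+B(u,N_2v)=0$, so $N_2=-N_1^*$ under the perfect pairing $B:V_1\times V_2\to\R$, and adjoints (and sign changes) preserve the ranks of all powers, hence the Jordan partition. This buys a proof that consumes the boundary-rigidity input only once, and your duality statement is sharper than the paper's Lemmas \ref{derivations} and \ref{derivations2}, which only constrain the sizes of chains that happen to pair nontrivially rather than producing the full partition equality; it also records the nice fact that for $d=3$ the entire Jordan form of such a derivation of $\mathfrak{k}_n$ is encoded by its restriction to $V_1$ together with the eigenvalues. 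What the paper's quotient argument buys in exchange is that it is exactly the mechanism (restriction plus quotient by the normalizer) that iterates in the induction for general $d$ later in Section \ref{SectionHeisenberg}. Your preliminary steps — invoking Lemma \ref{premaintheorem} to equate the eigenvalues, and the structural analysis forcing $V_3=\mathrm{Span}(Z)$, $\lambda_3=\lambda_1+\lambda_2$, and $[V_1,V_1]=[V_2,V_2]=0$ — are correct and make explicit what the paper leaves implicit in the claim $\mathfrak{N}(V_1)=V_1\oplus V_3$.
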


\begin{proof} By Lemma \ref{dimclass}, $\Phi$ maps the left cosets of $U_{\alpha}$ into the left cosets of $U_{\beta}$. After composing $\Phi$ with a left translation we may assume that $\Phi(U_{\alpha})=U_{\beta}$. So we have a bi-Lipschitz map between $(U_{\alpha},\varrho_{\alpha|_{V_1}})$ and $(U_{\beta},\varrho_{\beta|_{W_1}})$. Since these groups are abelian, by \cite{XiangdongXie14} we have that $\alpha|_{V_1}$ and $\beta|_{W_1}$ have the same Jordan form.

It is easy to see that $\mathfrak{N}(V_1)=V_1\oplus V_3$ and $\mathfrak{N}(W_1)=W_1\oplus W_3$. Then $\mathfrak{k}_n/\mathfrak{N}(V_1)$ and $\mathfrak{k}_n/\mathfrak{N}(W_1)$ are isomorphic to $V_2$ and $W_2$ respectively, and are therefore abelian. The quasi-metrics induced on these quotients by $\varrho_{\alpha}$ and $\varrho_{\beta}$ are bi-Lipschitz equivalent with $\varrho_{\alpha|_{V_2}}$ and $\varrho_{\beta|_{W_2}}$. Since $\Phi$ induces a bi-Lipschitz homeomorphism 
$$\tilde{\Phi}:(K_n/N(U_{\alpha}),\varrho_{\alpha|_{V_2}}) \to (K_n/N(U_{\beta}),\varrho_{\beta|_{W_2}}),$$
we conclude that $\alpha|_{V_2}$ and $\beta|_{W_2}$ have the same Jordan form. This implies that $\alpha$ and $\beta$ have the same Jordan form. 
\end{proof}

Let $\alpha$ and $\beta$ be as in the statement of Theorem \ref{heisenberg}. By Theorem \ref{maintheorem}, their eigenvalues are the same up to a scalar multiple, so we may assume that they have the same eigenvalues. We will write $V_1,\ldots,V_d$ and $W_1,\ldots,W_d$ the generalized eigenspaces associated to $\alpha$ and $\beta$ respectively. By (Fact 4), the quasi-isometry induces a bi-Lipschitz homeomorphism $\Phi:(K_n,\varrho_{\alpha}) \to (K_n,\varrho_{\beta})$. We will prove by induction on $d$ that this implies that $\alpha$ and $\beta$ have the same Jordan form.
 
Lemmas \ref{twoeigenvalues} and \ref{threeeigenvalues} are the base cases of the induction. Suppose that the claim is true for $d'<d$ and $d> 3$. By Lemma \ref{dimclass}, we may assume that $\Phi$ sends $U_{\alpha}$ into $U_{\beta}$. As in \ref{threeeigenvalues}, we know that $\alpha|_{V_1}$ and $\beta|_{W_1}$ have the same Jordan form. 

Observe that $\mathfrak{N}(V_1)=\bigoplus_{i\neq d-1}V_i$ and $\mathfrak{N}(W_1)=\bigoplus_{i\neq d-1}W_i$. Moreover, we identify the quotients
$$\mathfrak{N}(V_1)/V_1=\bigoplus_{i\neq 1,d-1}V_i\text{ and }\mathfrak{N}(W_1)/W_1=\bigoplus_{i\neq 1,d-1}W_i.$$
We have that $\Phi$ maps $N(U_{\alpha})$ into $N(U_{\beta})$, and therefore induces a bi-Lipschitz map 
$$\tilde{\Phi}:N(U_{\alpha})/U_{\alpha}, \to N(U_{\beta})/U_{\beta},$$
where the quasi-metrics are the induced by $\alpha|_{\mathfrak{N}(V_1)/V_1}$ and $\beta|_{\mathfrak{N}(W_1)/W_1}$ respectively. These quotient groups are Heisenberg groups with less eigenvalues than $d$. Then $\alpha|_{N(V_1)/V_1}$ and $\beta|_{N(W_1)/W_1}$ have the same Jordan form.

Finally, using that $\Phi$ induces a bi-Lipschitz homeomorphism between the abelian groups $K_n/N(U_{\alpha})$ and $K_n/N(U_{\beta})$, by \cite{XiangdongXie14}, we have that $\alpha|_{V_{d-1}}$ and $\beta|_{W_{d-1}}$ have the same Jordan form. This completes the proof.

\subsection*{Acknowledgments} This work was supported by a research grant of the CAP (Comisi\'on Acad\'emiaca de Posgrado) of the Universidad de la Rep\'ublica. The authors are very grateful.

\Addresses

\end{document}